\newtheorem{thm}{Theorem}[section]
\newtheorem{prop}[thm]{Proposition}
\newtheorem{lemma}[thm]{Lemma}
\newtheorem{cor}[thm]{Corollary}
\newtheorem{exam}[thm]{Example}
\theoremstyle{remark}
\newtheorem{remark}[thm]{Remark}
\newcommand{\id}{{\rm{id}}}
\newcommand{\Ad}{{\rm{Ad}}}
\newcommand{\BN}{\mathbf N}
\newcommand{\BC}{\mathbf C}
\newcommand{\BB}{\mathbf B}
\newcommand{\la}{\langle}
\newcommand{\ra}{\rangle}
\newcommand{\Rep}{{\rm{Rep}}}
\newcommand{\rCP}{{\rm{CP}}}
\newtheorem{Def}{Definition}[section]
\title{Strong Morita equivalences for completely positive linear maps and GNS-C*-correspondences}
\author{Kazunori Kodaka}
\address{Department of Mathematical Sciences, Faculty of Science, Ryukyu University, 
\endgraf
Nishihara-cho, Okinawa, 903-0213, Japan}
\address{\sl{E-mail address}: \rm{kodaka@math.u-ryukyu.ac.jp}}
\keywords{$C^*$-algebras, completely positive linear maps, equivalence bimodules Hilbert $C^*$-modules
strong Morita equivalence}
\subjclass[2010]{46L05}
\begin{document}
\begin{abstract}
We will consider the set of all completely positive linear maps from a unital $C^*$-algebra to the $C^*$-algebra
of all (bounded) adjointable right Hilbert $C^*$-module maps, which are automatically bounded, on a right
Hilbert $C^*$-module and we will introduce strong Morita equivalence for elements in this set. In this paper,
we will give the following result:
If two classes of two unital $C^*$-algebras are strongly Morita equivalent, respectively, 
then we can construct a bijective correspondence between two sets of all strong Morita equivalence classes of 
completely positive linear maps given as above. Furthermore, we will discuss the relation between strong Morita 
equivalence for completely positive linear maps and strong Morita equivalence for GNS-$C^*$-correspondences.
\end{abstract}

\maketitle

\section{Introduction}\label{sec:Intro} In the previous paper \cite {Kodaka:complete}, we introduce the notion of
strong Morita equivalence for comletely positive linear maps from a $C^*$-algebra to the $C^*$-algebra of
alll bounded linear operators on a Hilbert space using their minimal Stinespring representations and
\cite [Definition 2.1]{ER:multiplier} introduced by Echterhoff and Raeburn.
\par
In this paper, we consider completely positive linear maps from a unital $C^*$-algebra $C$ to the $C^*$-algebra
of all adjointable right Hilbert $B$-module maps, which are automatically bounded by Blackadar
\cite [Proposition 13.2.2]{Blackadar:K-Theory}, on a right Hilbert $B$-module $F$, where $B$ is a unital
$C^*$-algebra. We denote by $\rCP_B (C)$, the set of all such completely positive linear maps.
We introduce the notion of strong Morita equivalence for elements in $\rCP_B (C)$ using the KSGNS-representation
given in Lance \cite [Theorem 5.6]{Lance:toolkit}. In order to do this, we have to extend \cite [Definitoin 2.1]{ER:multiplier}
to non-degenerate representations of a unital $C^*$-algebra on a right Hilbert $C^*$-module. This
extension will be done in Section \ref{sec:Re}.
\par
In Section \ref{sec:Co}, we will give a result, which is similar to \cite [Corollary 6.6]{Kodaka:complete}
in the following way: Let $\psi$ be a completely positive linear map
from a uintal $C^*$-algebra $D$ to $\BB_B (F)$, where $B$ is a unital $C^*$-algebra and $\BB_B (F)$ is the
$C^*$-algebra of all adjointable rightable right Hilbert $B$-module maps on a right Hilbert $B$-module $F$.
Let $C$ be a unital $C^*$-algebra which is strongly Morita equivalent to $D$. Then in the same way as in
\cite [Section 6]{Kodaka:complete}, using $\psi$, we will construct a completely positive linear map $\phi$ from
$C$ to $\BB_B (F\otimes \BC^n )$, where $n$ is some positive integer. We will show that this construction induces
a bijective correspondence between $\rCP_B (D)/\! \sim$ and $\rCP_B (C)/\! \sim$, where $\rCP_B (D)/\! \sim$ and
$\rCP_B (C)/\! \sim$ denote the sets of all strong Morita equivalence classes in $\rCP_B (D)$ and $\rCP_B (C)$,
respectively. Furthermore, we assume that $A$ is strongly Morita equivalent to $B$. We will show that there exists
a bijective correspondence between $\rCP_B (C)/\! \sim$ and $\rCP_A (C)/\! \sim$. Thus, we obtain the following result:
If $C$ and $A$ are strongly Morita equivalent to $D$ and $B$, respectively, then there exists a bijective correspondence
between $\rCP_B (D)/\! \sim$ and $\rCP_A (C)/\!\sim$.
\par
In Section \ref{sec:GNS}, we will discuss the relation between strong Morita equivalence for completely positive
linear maps and strong Morita equivalence for GNS-$C^*$-correspondences induced by completely positive linear
maps. It is natural to do it since GNS-$C^*$-correspondences induced by completely positive linear maps are
constructed in the same way as in the KSGNS-construction.
\par
Let $A$ be a unital $C^*$-algebra and we denote by $\id_A$ and $1_A$,
the identity map on $A$ and the unit element in $A$, respectively. If no confusion arises, we simply denote them by
$\id$ and $1$, respectively.
\par
For each $n\in \BN$, let $M_n (A)$ be the $n\times n$-matrix algebra over $A$. We identify $M_n (A)$ with
$A\otimes M_n (\BC)$. Let $I_n$ be the unit element in $M_n (\BC)$. For each $a\in M_n (A)$, we denote by
$a_{ij}$ the $i\times j$-entry of $a$.
\par
Let $A$ and $B$ be unital $C^*$-algebras and $X$ an $A-B$-equivalence bimodule. We denote its left $A$-action
and right $B$-action on $X$ by $a\cdot x$ and $x\cdot b$ for any $a\in A$, $b\in B$, $x\in X$, respectively.
Let $\widetilde{X}$ be the dual $B-A$-equivalence bimodule of $X$ and let $\widetilde{x}$ denote the element
in $\widetilde{X}$ associated to the element $x\in X$.
\par
For right Hilbert $A$-modules $E$and $F$, let $\BB_A (E, F)$ be the space of all bounded adjointable right $A$-module
maps from $E$ to $F$ and if $E=F$, we denote $\BB_A (E,F)$ by $\BB_A (E)$. In this case, $\BB_A (E)$ is a unital
$C^*$-algebras.

\section{Representations of a unital $C^*$-algebra on right Hilbert modules and strong Morita equivalence}
\label{sec:Re} Let $C$ and $A$ be unital $C^*$-algebras.
\begin{Def}\label{def:Re1} We say that $(\pi_C , E)$ is a
\sl
representation
\rm
of $C$ on a right Hilbert $A$-module $E$
if $\pi_C$ is a homomorphism of $C$ to $\BB_A (E)$.
Furthermore, we that $(\pi_C , E)$ is
\sl
non-degenerate
\rm
if $\overline{\pi_C (C)E}=E$.
\end{Def}

Let $B$, $C$, $D$ be unital $C^*$-algebras. Let $(\pi_C , E)$ and $(\pi_D , F)$ be non-degenerate
representations of $C$ and $D$ on Hilbert $B$-modules $E$ and $F$, respectively.
\begin{Def}\label{def:Re2} We say that $(\pi_C , E)$ is
\sl
strongly Morita equivalent
\rm
to $(\pi_D , F)$ by if there exist a $C-D$-equivalence bimodule $Y$ and a linear map $\pi_Y$ from $Y$ to
$\BB_B(F, E)$ satisfying the following conditions:
\newline
(1) $\pi_Y (y)\pi_Y (z)^* =\pi_C ({}_C \la y , z \ra)$,
\newline
(2) $\pi_Y (y)^* \pi_Y (z)=\pi_D (\la y, z )_D )$,
\newline
(3) $\pi_Y (c\cdot y\cdot d)=\pi_C (c)\pi_Y (y)\pi_D (d)$
for any $y, z\in Y$, $c\in C$, $d\in D$.
\newline
We denote it by $(\pi_C , E)\sim (\pi_D , F)$.
\end{Def}
This definition is an extension of \cite [Definition2.1]{ER:multiplier} in the case that
$C^*$-algebras are unital.

\begin{prop}\label{prop:Re3} Strong Morita equivalence for non-degenerate representations of unital $C^*$-algebras
on right Hilbert $C^*$-modules is an equivalence relation.
\end{prop}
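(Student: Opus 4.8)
The plan is to verify the three properties of an equivalence relation separately, the first two being formal and the third requiring a passage to a completion.

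\emph{Reflexivity.} Given a non-degenerate representation $(\pi_C , E)$ of a unital $C^*$-algebra $C$ on a right Hilbert $B$-module $E$, I would take $Y=C$ regarded as the standard $C-C$-equivalence bimodule, with ${}_C \la c, d\ra = cd^*$, $\la c, d\ra_C = c^* d$, and left and right actions by multiplication, and set $\pi_Y = \pi_C$. Then conditions (1)--(3) of Definition \ref{def:Re2} reduce to the identities $\pi_C (c)\pi_C (d)^* = \pi_C (cd^*)$, $\pi_C (c)^* \pi_C (d) = \pi_C (c^* d)$, and $\pi_C (cyd) = \pi_C (c)\pi_C (y)\pi_C (d)$, all immediate because $\pi_C$ is a $*$-homomorphism. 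Hence $(\pi_C , E)\sim (\pi_C , E)$.

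\emph{Symmetry.} Suppose $(\pi_C , E)\sim (\pi_D , F)$ is witnessed by a $C-D$-equivalence bimodule $Y$ and a linear map $\pi_Y \colon Y\to \BB_B (F, E)$. I would pass to the dual $D-C$-equivalence bimodule $\widetilde{Y}$ and define $\pi_{\widetilde{Y}}\colon \widetilde{Y}\to \BB_B (E, F)$ by $\pi_{\widetilde{Y}}(\widetilde{y}) = \pi_Y (y)^*$. Since $y\mapsto \widetilde{y}$ and $T\mapsto T^*$ are both conjugate-linear, $\pi_{\widetilde{Y}}$ is well defined and linear. Using the identities ${}_D \la \widetilde{y}, \widetilde{z}\ra = \la y, z\ra_D$, $\la \widetilde{y}, \widetilde{z}\ra_C = {}_C \la y, z\ra$, and $d\cdot \widetilde{y}\cdot c = \widetilde{c^* \cdot y\cdot d^*}$ in $\widetilde{Y}$, conditions (1)--(3) for $(\pi_{\widetilde{Y}}, \widetilde{Y})$ follow by taking adjoints in the corresponding conditions for $(\pi_Y , Y)$. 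Hence $(\pi_D , F)\sim (\pi_C , E)$.

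\emph{Transitivity.} Suppose $(\pi_{C_1}, E_1)\sim (\pi_{C_2}, E_2)$ via $(Y_1 , \pi_{Y_1})$ and $(\pi_{C_2}, E_2)\sim (\pi_{C_3}, E_3)$ via $(Y_2 , \pi_{Y_2})$, where $E_1 , E_2 , E_3$ are right Hilbert $B$-modules. I would let $Y = Y_1 \otimes_{C_2} Y_2$ be the internal tensor product $C_1-C_3$-equivalence bimodule and define $\pi_Y$ on the algebraic $C_2$-balanced tensor product $Y_1 \odot_{C_2} Y_2$ by $\pi_Y (y_1 \otimes y_2) = \pi_{Y_1}(y_1)\pi_{Y_2}(y_2)\in \BB_B (E_3 , E_1)$, extended linearly. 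The first point is that this respects the balancing: $\pi_{Y_1}(y_1 \cdot c)\pi_{Y_2}(y_2) = \pi_{Y_1}(y_1)\pi_{C_2}(c)\pi_{Y_2}(y_2) = \pi_{Y_1}(y_1)\pi_{Y_2}(c\cdot y_2)$ for $c\in C_2$, by condition (3) for $\pi_{Y_1}$ and $\pi_{Y_2}$, so $\pi_Y$ is a well-defined linear map on $Y_1 \odot_{C_2} Y_2$. Next, for $\xi\in Y_1 \odot_{C_2} Y_2$, expanding $\pi_Y (\xi)\pi_Y (\xi)^*$ and applying condition (1) for $\pi_{Y_1}$ and conditions (2)--(3) for $\pi_{Y_2}$, together with the standard formula ${}_{C_1}\la y_1 \otimes y_2 , z_1 \otimes z_2 \ra = {}_{C_1}\la y_1 \cdot {}_{C_2}\la y_2 , z_2\ra , z_1 \ra$, I would obtain $\pi_Y (\xi)\pi_Y (\xi)^* = \pi_{C_1}({}_{C_1}\la \xi , \xi\ra)$; hence $\|\pi_Y (\xi)\|^2 \le \|{}_{C_1}\la \xi , \xi \ra\| = \|\xi\|^2$, so $\pi_Y$ is contractive and extends continuously to all of $Y$. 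Finally, since both sides of conditions (1)--(3) are continuous in the relevant variables and coincide on the dense submodule $Y_1 \odot_{C_2} Y_2$ (the verification there being the computation just sketched, plus the analogous ones for $\pi_Y (\xi)^* \pi_Y (\eta)$ against $\la \xi , \eta\ra_{C_3} = \la y_2 , \la y_1 , z_1 \ra_{C_2}\cdot z_2 \ra_{C_3}$ and for $\pi_{C_1}(c_1)\pi_Y (\xi)\pi_{C_3}(c_3)$ against the bimodule action on $Y_1 \otimes_{C_2} Y_2$), they hold on all of $Y$. Hence $(\pi_{C_1}, E_1)\sim (\pi_{C_3}, E_3)$.

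I expect the transitivity step to be the main obstacle, as it is the only place where one leaves the algebraic level: one must check compatibility with the $C_2$-balancing, extract contractivity from condition (1) to extend $\pi_Y$ to the Hilbert-module completion, and then confirm that all three conditions pass to the completion. Reflexivity and symmetry are purely formal.
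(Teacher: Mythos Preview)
Your proposal is correct and follows essentially the same approach as the paper: the same choices of bimodule and linear map for each of reflexivity ($Y=C$, $\pi_Y=\pi_C$), symmetry ($\widetilde{Y}$, $\pi_{\widetilde{Y}}(\widetilde{y})=\pi_Y(y)^*$), and transitivity ($Y_1\otimes_{C_2}Y_2$, $\pi_Y(y_1\otimes y_2)=\pi_{Y_1}(y_1)\pi_{Y_2}(y_2)$). Your transitivity argument is in fact more careful than the paper's, which simply asserts that ``routine computations'' verify Conditions (1)--(3) without explicitly addressing the $C_2$-balancing or the extension of $\pi_Y$ from the algebraic tensor product to the completion.
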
 
\begin{proof} Let $(\pi_C , E)$ be a non-degenerate representation of a unital $C^*$-algebra $C$ on a right Hilbert
$B$-module $E$, where $B$ is a unital $C^*$-algebra. We regard $C$ as the trivial $C-C$-equivalence bimodule in
the natural way. We denote it by $Y_0$. Let $\pi_{Y_0}$ be the linear map from $Y_0$ to $\BB_B (E)$ defined by
$\pi_{Y_0}=\pi_C$. Then $\pi_{Y_0}$ satisfies Conditions (1)-(3) in Definition \ref{def:Re2}. Hence $(\pi_C , E)$ is
strongly Morita equivalent to itself.
\par
Let $(\pi_D ,F)$ be a non-degenerate representation of a unital $C^*$-algebra $D$ on a right Hilbert $B$-module
$F$. We suppose that $(\pi_C, E)$ is strongly Morita equivalent to $(\pi_D , F)$ with respect to
a $C-D$-equivalence bimodule $Y$ and a linear map $\pi_Y$ from $Y$ to $\BB_B (F, E)$. Let $\pi_{\widetilde{Y}}$ be
the linear map from $\widetilde{Y}$ to $BB_B (E, F)$ defined by
$$
\pi_{\widetilde{Y}}(\widetilde{y})=\pi_Y (y)^*
$$
for any $y\in Y$. Then by routine computations, we can see that $\pi_{\widetilde{Y}}$ satisfies Conditions (1)-(3) in
Definition \ref{def:Re2}. Thus we $(\pi_D , F)$ is strongly Morita equivalent to $(\pi_C , E)$.
\par
Let $(\pi_L , G)$ be a non-degenerate representation of a unital $C^*$-algebra $L$ on a right Hilbert $B$-module
$G$. We suppose that $(\pi_C ,E)$ is strongly Morita equivalent to $(\pi_D , F)$ with respect to a $C-D$-equivalence
bimodule $Y$ and a linear map $\pi_Y$ from $Y$ to $\BB_B (F, E)$ satisfying Conditions (1)-(3)
in Definition \ref{def:Re2}. Also, we suppose that $(\pi_D , F)$ is strongly Morita equivalent to $(\pi_L , G)$
with respect to a $D-L$-equivalence bimodule $W$ and a linear map $\pi_W$ from $W$ to $\BB_B(G, F)$
satisfying Conditions (1)-(3) in Definition \ref{def:Re2}. We show that $(\pi_C , E)$ is strongly Morita equivalent to
$(\pi_L , G)$. Clearly $C$ is strongly Morita equivalent to $L$ with respect to the $C-L$-equivalence bimodule
$Y\otimes_D W$. Let $\pi_{Y\otimes_D W}$ be the linear map from $Y\otimes_D W$ to $\BB_B (G, E)$ defined by
$$
\pi_{Y\otimes_D W}(y\otimes w)=\pi_Y (y)\pi_W (w)
$$
for any $y\in Y$, $w\in W$. Then by routine computations, we can see that $\pi_{Y\otimes_D W}$ satisfies Conditions
(1)-(3) in Definition \ref{def:Re2}. Therefore, we obtain the conclusion.
\end{proof}

Let $C$, $A$ be unital $C^*$-algebras. For $i=1, 2$, let $(\pi_i , E_i )$ be a non-degenerate representation of $C$
on a right Hilbert $A$-module $E_i$.

\begin{lemma}\label{lem:Re4} With the above notation, if $(\pi_1 , E_1 )$ and $(\pi_2 , E_2 )$ are unitarily equivalent,
they are strongly Morita equivalent.
\end{lemma}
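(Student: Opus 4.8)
The plan is to reduce the claim to the reflexivity construction used in the proof of Proposition \ref{prop:Re3}, twisted by the intertwining unitary. Recall that $(\pi_1 , E_1 )$ and $(\pi_2 , E_2 )$ being unitarily equivalent means that there is an adjointable unitary $u\in\BB_A (E_1 , E_2 )$, that is, $u^* u=\id_{E_1}$ and $uu^* =\id_{E_2}$, with $\pi_2 (c)=u\pi_1 (c)u^*$ for all $c\in C$. One checks easily that $\overline{\pi_2 (C)E_2}=\overline{u\pi_1 (C)u^* E_2}=u\,\overline{\pi_1 (C)E_1}=uE_1 =E_2$, so non-degeneracy of one representation forces non-degeneracy of the other, consistently with the hypotheses.

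First I would take $Y=Y_0$, the trivial $C-C$-equivalence bimodule $C$ (so that ${}_C \la y, z\ra =yz^*$ and $\la y, z\ra_C =y^* z$), as in Proposition \ref{prop:Re3}, and define a linear map $\pi_{Y_0}\colon Y_0 \to\BB_A (E_2 , E_1 )$ by $\pi_{Y_0}(c)=\pi_1 (c)u^*$; this does land in $\BB_A (E_2 , E_1 )$ since $\pi_1 (c)\in\BB_A (E_1 )$ and $u^* \in\BB_A (E_2 , E_1 )$. Then I would verify Conditions (1)--(3) of Definition \ref{def:Re2} by the obvious computations. Condition (1) is immediate from $u^* u=\id_{E_1}$ and $\pi_1$ being a $*$-homomorphism, giving $\pi_{Y_0}(y)\pi_{Y_0}(z)^* =\pi_1 (y)\pi_1 (z)^* =\pi_1 ({}_C \la y, z\ra )$. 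Condition (2) follows by using the intertwining relation to rewrite $\pi_{Y_0}(y)^* \pi_{Y_0}(z)=u\pi_1 (y^* z)u^* =\pi_2 (\la y, z\ra_C )$. Condition (3) follows from $u^* \pi_2 (d)=\pi_1 (d)u^*$ (again a consequence of $u^* u=\id_{E_1}$ and the intertwining relation), whence $\pi_{Y_0}(c\cdot y\cdot d)=\pi_1 (c)\pi_1 (y)\pi_1 (d)u^* =\pi_1 (c)\,\pi_{Y_0}(y)\,\pi_2 (d)$. This gives $(\pi_1 , E_1 )\sim(\pi_2 , E_2 )$.

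I do not expect a genuine obstacle: the statement is essentially bookkeeping. The only point requiring care is keeping the conventions straight --- which of $u$, $u^*$ is adjointable from where, and correspondingly whether one sets $\pi_{Y_0}(c)=\pi_1 (c)u^*$ or $\pi_{Y_0}(c)=u^* \pi_2 (c)$ (these coincide) --- so that in Conditions (1) and (2) the two inner products of the trivial bimodule get paired with $\pi_1$ and $\pi_2$ on the correct sides. A slightly more conceptual alternative would be to combine reflexivity with ``conjugation by $u$'' repackaged as a bimodule map via Proposition \ref{prop:Re3}, but the direct construction above is shorter.
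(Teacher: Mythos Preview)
Your proposal is correct and follows essentially the same approach as the paper: take the trivial $C$--$C$-equivalence bimodule $Y_0=C$ and set $\pi_{Y_0}(y)=\pi_1(y)u^*\in\BB_A(E_2,E_1)$, then verify Conditions (1)--(3) of Definition~\ref{def:Re2}. The paper's proof is terser (it does not spell out the three verifications), but the construction and the key map are identical to yours.
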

\begin{proof} Since $(\pi_1 , E_1 )$ and $(\pi_2 , E_2 )$ are unitarily equivalent, there is an isometry
$u\in \BB_B (E_1 , E_2 )$ such that $\pi_2 =\Ad(u)\circ\pi_1$. Let $Y_0 (=C)$ be the trivial $C-C$-equivalence
bimodule. Let $\pi_{Y_0}$ be the linear map from $Y_0$ to $\BB_B (E_2 , E_1 )$ defined by
$$
\pi_{Y_0}(y)=\pi_1 (y)u^*
$$
Then $\pi_{Y_0}$ satisfies Conditions (1)-(3) in Definition \ref{def:Re2}. Thus, $(\pi_1 , E_1 )$ and $(\pi_2 , E_2 )$
are strongly Morita equivalent.
\end{proof}
Let $C, D$ and $B$ be unital $C^*$-algebras and we suppose that $C$ and $D$ are strongly Morita equivalent with
respect to a $C-D$-equivalence bimodule $Y$. Let $(\pi_D , F)$ be a non-degenerate representation of $D$ on a
right Hilbert $B$-module $F$. Modifying \cite [Section 2]{ER:multiplier}, we construct a non-degenerate
representation $(\pi_C , E)$ of $C$ on a right Hilbert $B$-module $E$, which is strongly Morita equivalent to
$(\pi_D , F)$. We regard $F$ as a Hilbert $D-B$-bimodule using the representation $(\pi_D , F)$.
Let $E$ be a right Hilbert $B$-module by setting
$$
E=Y\otimes_D F .
$$
Let $\pi_C$ be the map from $C$ to $\BB_B (E)$ defined by
$$
\pi_C (c)(y\otimes f)=(c\cdot y )\otimes f
$$
for any $c\in C$, $y\in Y$, $f\in F$. Then by \cite [Proposition 1.7(i)]{BMS:quasi} and
easy computations, $(\pi_C , E)$ is a non-degenerate representation of $C$. We will
show that $(\pi_C , E)$ is strongly Morita equivalent to $(\pi_D , F)$.

\begin{lemma}\label{lem:Re5} With the above notation, $(\pi_C , E)$ is strongly Morita equivalent to $(\pi_D , F)$.
\end{lemma}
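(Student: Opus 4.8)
The plan is to take $Y$ itself as the implementing $C-D$-equivalence bimodule and to define a linear map $\pi_Y\colon Y\to\BB_B(F,E)$ by
$$
\pi_Y(y)f=y\otimes f
$$
for $y\in Y$ and $f\in F$. First I would check that each $\pi_Y(y)$ is a bounded $B$-module map and that $y\mapsto\pi_Y(y)$ is linear: since
$$
\la y\otimes f,\,y\otimes f\ra_B=\la f,\,\la y,y\ra_D\cdot f\ra_B\le\|\la y,y\ra_D\|\,\la f,f\ra_B
$$
in the order of $B$, one gets $\|\pi_Y(y)f\|\le\|y\|\,\|f\|$, and linearity in $y$ is clear from the construction of the internal tensor product.

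The one genuinely non-routine point is \emph{adjointability}, since a merely bounded $B$-module map between Hilbert $B$-modules need not be adjointable: I would show directly that $\pi_Y(y)$ is adjointable with
$$
\pi_Y(y)^*(z\otimes g)=\la y,z\ra_D\cdot g=\pi_D(\la y,z\ra_D)g .
$$
This requires: (a) the right-hand side is well defined on the algebraic balanced tensor product, because $\la y,z\cdot d\ra_D=\la y,z\ra_D\,d$ makes it respect the identification $z\cdot d\otimes g=z\otimes d\cdot g$ in $E$; (b) it is bounded, so that it extends to all of $E$; and (c) the adjoint identity, which is immediate from
$$
\la\pi_Y(y)f,\,z\otimes g\ra_B=\la y\otimes f,\,z\otimes g\ra_B=\la f,\,\la y,z\ra_D\cdot g\ra_B .
$$
This is the standard ``creation operator'' computation for internal tensor products of Hilbert $C^*$-modules; see \cite{Lance:toolkit}.

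Once the adjoint is available, Conditions (1)--(3) of Definition \ref{def:Re2} follow by direct computation. For (2), $\pi_Y(y)^*\pi_Y(z)f=\pi_Y(y)^*(z\otimes f)=\pi_D(\la y,z\ra_D)f$ for all $f$, so $\pi_Y(y)^*\pi_Y(z)=\pi_D(\la y,z\ra_D)$. For (1), using the equivalence-bimodule identity $y\cdot\la z,w\ra_D={}_C\la y,z\ra\cdot w$ and the definition of $\pi_C$,
$$
\pi_Y(y)\pi_Y(z)^*(w\otimes f)=\pi_Y(y)\bigl(\la z,w\ra_D\cdot f\bigr)=(y\cdot\la z,w\ra_D)\otimes f=({}_C\la y,z\ra\cdot w)\otimes f=\pi_C({}_C\la y,z\ra)(w\otimes f) .
$$
For (3), using that $Y\otimes_D F$ is balanced over $D$,
$$
\pi_Y(c\cdot y\cdot d)f=(c\cdot y\cdot d)\otimes f=(c\cdot y)\otimes(d\cdot f)=\pi_C(c)\bigl(y\otimes\pi_D(d)f\bigr)=\pi_C(c)\pi_Y(y)\pi_D(d)f .
$$
Hence $\pi_Y$ satisfies (1)--(3), so $(\pi_C,E)\sim(\pi_D,F)$. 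The main obstacle is the adjointability step in the second paragraph --- exhibiting the candidate adjoint explicitly and verifying that it is well defined on the balanced tensor product and bounded; everything after that is the routine bookkeeping with the structure maps of $Y$ and the $D$-balancing of $Y\otimes_D F$ that the paper has been carrying out.
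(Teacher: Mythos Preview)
Your proof is correct and follows essentially the same approach as the paper: both define $\pi_Y(y)f=y\otimes f$, identify the adjoint as $\pi_Y(y)^*(z\otimes g)=\pi_D(\la y,z\ra_D)g$ via the inner-product computation, and then verify Conditions (1)--(3) by the same direct calculations using the balancing in $Y\otimes_D F$ and the equivalence-bimodule identity ${}_C\la y,z\ra\cdot w=y\cdot\la z,w\ra_D$. Your treatment is slightly more explicit about boundedness and well-definedness of the candidate adjoint on the balanced tensor product, but there is no substantive difference.
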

\begin{proof} It suffices to show that there is a linear map $\pi_Y$ from $Y$ to $\BB_B (F, E)$
satisfying Conditions (1)-(3) in Definition \ref{def:Re2}. Let $\pi_Y$ be the linear map from $Y$
to the space of all bounded right $B$-module maps from $F$ to $E$ defined by
$$
\pi_Y (y)f=y\otimes f
$$
for any $y\in Y$, $f\in F$. We show that $\pi_Y (y)\in\BB_B (F, E)$ for any $y\in Y$ and that
$\pi_Y$ satisfies Conditions (1)-(3) in Definition \ref{def:Re2}.
For any $y, z \in Y$, $f, g \in F$.
\begin{align*}
\la z\otimes f\, , \, \pi_Y (y)g \ra_B & =\la z\otimes f\, ,\, y\otimes g \ra_B =\la f\, , \, \pi_D (\la z, y \ra_D )g \ra_B \\
& =\la \pi_D (\la y , z \ra_D )f\, , \, g \ra_B .
\end{align*}
Thus $\pi(y)\in \BB_B (F,E)$ for any $y\in Y$ and 
$\pi(y)^* (z\otimes f)=\pi_D (\la y, z \ra_D )f$ for any $y, z\in Y$, $f\in F$. Then for any $y, y_1 , z\in Y$, $f\in F$,
$$
\pi_Y (y)\pi_Y (y_1 )^* (z\otimes f)=\pi_Y (y)\pi_D (\la y_1 \, , \, z \ra_D )f
=y\otimes \pi_D (\la y_1 \, , \, z \ra_D )f .
$$
On the other hand,
$$
\pi_C ({}_C \la y, y_1 \ra )(z\otimes f) =({}_C \la y\, , \, y_1 \ra \cdot z )\otimes f =y\cdot \la y_1 \, , \, z \ra_D \otimes f
=y\otimes \pi_D (\la y_1 \, , \, z \ra_D )f .
$$
Hence $\pi_Y (y)\pi_Y (y_1 )^* =\pi_C ({}_C \la y\, , \, y_1 \ra )$. Also, for any $y, y_1 \in Y$, $f\in F$,
$$
\pi_Y (y)^* \pi_Y (y_1 )f =\pi_Y (y)^* (y_1 \otimes f )=\pi_D (\la y\, , \, y_1 \ra_D )f .
$$
Hence $\pi_Y (y)^* \pi_Y (y_1 )=\pi_D (\la y\, , \, y_1 \ra_D )$. Furthermore, for any $c\in C$, $d\in D$, $f\in F$, $y\in Y$,
$$
\pi_Y (c\cdot y\cdot d)f=c\cdot y\cdot d\otimes f=\pi_C (c)(y\otimes\pi_D (d)f)
=\pi_C (c)\pi_Y (y)\pi_D (d)f .
$$
Thus $\pi_Y (c\cdot y\cdot d)=\pi_C (c)\pi_Y (y)\pi_D (d)$. Therefore, we obtain the conclusion.
\end{proof}

We call the above $(\pi_C , E)$ a non-degenerate representation
\sl
induced by
\rm
$(\pi_D , F)$ and $Y$.
\par
Let $B, C$ and $A$ be unital $C^*$-algebras. Let $\Rep_B (C)$ be the set of all non-degenerate representations
of $C$ on right Hilbert $B$-modules. Let $\Rep_B (C)/\!\sim$ be the set of
all strong Morita equivalence classes of elements in $\Rep_B (C)$. Similarly, we define $\Rep_A (C)$ and
$\Rep_A (C)/\!\sim$, respectively. For any $(\pi, E)\in\Rep_A (C)$, $[\pi, E]$ denotes the strong Morita
equivalence class of $(\pi, E)$. We construct a bijective correspondence $\Phi$ between $\Rep_B (C)/\!\sim$ and
$\Rep_A (C)/\!\sim$.
\par
Let $(\pi, F)\in\Rep_B (C)$ and let $Z$ be a $B-A$-equivalence bimodule.
We consider the right Hilbert $A$-module $F\otimes_B Z$. Let $\pi^Z$ be the map from $C$ to
$\BB_A (F\otimes_B Z)$ defined by
$$
\pi^Z (c)(f\otimes z)=\pi(c)f\otimes z
$$
for any $f\in F$, $z\in Z$, $c\in C$. Then by easy computations, $(\pi^Z , F\otimes_B Z)$ is a non-degenerate
representation of $C$ on $F\otimes_B Z$. Thus $(\pi^Z , F\otimes_B Z)\in\Rep_A (C)$. Let $\Phi$ be the map
from $\Rep_B (C)/\!\sim$ to $\Rep_A (C)/\!\sim$ defined by
$$
\Phi([\pi, F])=[\pi^Z , F\otimes_B Z]
$$
for any $[\pi, F]\in\Rep_B (C)/\!\sim$.

\begin{lemma}\label{lem:Re6} With the above notation, $\Phi$ is well-defined and a bijective correspondence between
$\Rep_B (C)/\!\sim$ and $\Rep_A (C)/\!\sim$.
\end{lemma}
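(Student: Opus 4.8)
The plan is to produce an explicit inverse for $\Phi$ using the dual equivalence bimodule $\widetilde{Z}$, and to reduce both the well-definedness of $\Phi$ and the verification that the two composites are the identity to the functoriality of the interior tensor product $-\otimes_B Z$ together with Lemma \ref{lem:Re4}.

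First I would establish that $\Phi$ is well-defined. Suppose $(\pi, F)$ and $(\pi', F')$ are strongly Morita equivalent in $\Rep_B (C)$, say via a $C-C$-equivalence bimodule $Y$ and a linear map $\pi_Y : Y\to\BB_B (F', F)$ satisfying Conditions (1)--(3) of Definition \ref{def:Re2}. Define $\pi_Y^Z : Y\to\BB_A (F'\otimes_B Z, F\otimes_B Z)$ by $\pi_Y^Z (y)(f'\otimes z)=\pi_Y (y)f'\otimes z$. An inner-product computation of exactly the type in the proof of Lemma \ref{lem:Re5} shows that $\pi_Y^Z (y)$ is adjointable with $\pi_Y^Z (y)^* (f\otimes z)=\pi_Y (y)^* f\otimes z$, and then Conditions (1)--(3) for $(\pi_Y^Z , Y)$ follow at once from those for $\pi_Y$, since tensoring on the right by $\id_Z$ is compatible with the left $C$-actions defining $\pi^Z$ and $\pi'^Z$. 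Hence $(\pi^Z , F\otimes_B Z)\sim(\pi'^Z , F'\otimes_B Z)$, so $\Phi$ is well-defined (recall that each $(\pi^Z , F\otimes_B Z)$ already lies in $\Rep_A (C)$).

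Next I would construct a candidate inverse. Let $\widetilde{Z}$ be the dual $A-B$-equivalence bimodule of $Z$, and for $(\rho, G)\in\Rep_A (C)$ set $\rho^{\widetilde{Z}}(c)(g\otimes\widetilde{z})=\rho(c)g\otimes\widetilde{z}$ and $\Psi([\rho, G])=[\rho^{\widetilde{Z}}, G\otimes_A\widetilde{Z}]$. Interchanging the roles of $A$ and $B$ and replacing $Z$ by $\widetilde{Z}$ in the previous paragraph shows that $\Psi$ is a well-defined map from $\Rep_A (C)/\!\sim$ to $\Rep_B (C)/\!\sim$. It then remains to prove $\Psi\circ\Phi=\id$ and $\Phi\circ\Psi=\id$. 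Fixing $(\pi, F)\in\Rep_B (C)$, we have $\Psi(\Phi([\pi, F]))=[(\pi^Z)^{\widetilde{Z}}, (F\otimes_B Z)\otimes_A\widetilde{Z}]$. Composing the associativity unitary $(F\otimes_B Z)\otimes_A\widetilde{Z}\cong F\otimes_B(Z\otimes_A\widetilde{Z})$, the canonical unitary $Z\otimes_A\widetilde{Z}\cong B$ of $B-B$-equivalence bimodules, and the unitary $F\otimes_B B\cong F$, I would obtain a unitary $u\in\BB_B ((F\otimes_B Z)\otimes_A\widetilde{Z}, F)$ with $u((f\otimes z)\otimes\widetilde{w})=f\cdot{}_B\la z, w\ra$. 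Since each $\pi(c)$ is a right $B$-module map, a short computation gives $u\circ(\pi^Z)^{\widetilde{Z}}(c)=\pi(c)\circ u$ for every $c\in C$, so $(\pi^Z)^{\widetilde{Z}}$ and $\pi$ are unitarily equivalent, hence strongly Morita equivalent by Lemma \ref{lem:Re4}; thus $\Psi(\Phi([\pi, F]))=[\pi, F]$. The identity $\Phi\circ\Psi=\id$ would follow symmetrically from the canonical unitary $\widetilde{Z}\otimes_B Z\cong A$ of $A-A$-equivalence bimodules. Hence $\Phi$ is a bijection with inverse $\Psi$.

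The step I expect to be the main obstacle is the last one: one must verify not merely that $(F\otimes_B Z)\otimes_A\widetilde{Z}$ and $F$ are isomorphic as right Hilbert $B$-modules, but that the canonical isomorphism conjugates $(\pi^Z)^{\widetilde{Z}}$ into $\pi$; keeping track of the associativity isomorphism and the identification $Z\otimes_A\widetilde{Z}\cong B$ at the same time is where the bookkeeping is most delicate, though none of it is deep.
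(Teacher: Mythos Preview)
Your argument is correct. The well-definedness step and the construction of the inverse $\Psi$ via $\widetilde{Z}$ match the paper exactly. The difference lies in how the composites are shown to be the identity. The paper works directly at the level of Definition \ref{def:Re2}: for $(\pi,E)\in\Rep_A(C)$ it defines $\pi_{Y_0}:C\to\BB_A(E\otimes_A\widetilde{Z}\otimes_B Z,\,E)$ by $\pi_{Y_0}(y)(e\otimes\widetilde{z}\otimes z_1)=\pi(y)(e\cdot\la z,z_1\ra_A)$, proves adjointability by producing an explicit adjoint built from a finite basis $\{u_i\}$ of $Z$ (via \cite[Corollary 1.19]{KW1:bimodule}), and then verifies Conditions (1)--(3) by hand. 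You instead package the same collapse map as a single unitary $u$ coming from the standard identifications $Z\otimes_A\widetilde{Z}\cong B$ and $F\otimes_B B\cong F$, check that $u$ intertwines $(\pi^Z)^{\widetilde{Z}}$ with $\pi$, and then invoke Lemma \ref{lem:Re4}. Unwinding Lemma \ref{lem:Re4} shows your $\pi_{Y_0}(y)=\pi(y)\,u$ is exactly the paper's map, so the two proofs agree in substance; your route is more modular and avoids the explicit finite-basis computation for adjointability, while the paper's route is self-contained and makes the Morita-equivalence data completely explicit.
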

\begin{proof} First, we show that $\Phi$ is well-defined. Let $(\pi, F), (\pi_1  ,  F_1 )$ be elements in $\Rep_B (C)$
with $[\pi, F]=[\pi_1 , F_1 ]$ in $\Rep_B (C)/\!\sim$. Then there are a $C-C$-equivalence bimodule $Y$ and a linear map
$\pi_Y $ from $Y$ to $\BB_B (F, F)$ satisfying Conditions (1)-(3) in Definition \ref{def:Re2}. Let $(\pi^Z , F\otimes_B Z)$
and $(\pi_1^Z , F_1 \otimes_B Z)$ be non-degenerate representations of $C$ induced by $(\pi, F)$, $Y$ and
$(\pi_1 , F_1 )$, $Y$ respectively. Let $\pi_Y^Z$ be the linear map from $Y$ to
$\BB_A (F_1 \otimes_B Z\, , \, F\otimes_B Z )$ defined by
$$
\pi_Y^Z (f\otimes z)=\pi_Y (f)\otimes z =(\pi_Y \otimes\id_Z )(f\otimes z)
$$
for any $f\in F$, $z\in Z$. By easy computations, $\pi_Y^Z$ satisfies Conditions (1)-(3) in Definition \ref{def:Re2}.
Thus, $\Phi$ is well-defined. Next we show that $\Phi$ is a bijective correspondence between $\Rep_B (C)/\!\sim$ and
$\Rep_A (C)/\!\sim$. Let $\Psi$ be the map from $\Rep_A (C)/\!\sim$ to $\Rep_B (C)/\!\sim$ defined by
$$
\Psi([\pi, E])=[\pi^{\widetilde{Z}}, E\otimes_A \widetilde{Z}]
$$
for any $(\pi, E)\in\Rep_A (C)$. Then for any $(\pi, E)\in\Rep_A (C)$,
$$
(\Phi\circ\Psi)([\pi, E])=\Phi([\pi^{\widetilde{Z}}\, , \, E\otimes_A \widetilde{Z}])
=[(\pi^{\widetilde{Z}})^Z \, , \, Z\otimes_A \widetilde{Z}\otimes_B Z] .
$$
We show that $[(\pi^{\widetilde{Z}})^Z \, , \, E\otimes_A \widetilde{Z}\otimes_B Z]=[\pi, E]$ in $\Rep_A(C)/\!\sim$.
Let $Y_0 (=C)$ be the trivial $C-C$-equivalence bimodule. Let $\pi_{Y_0}$ be the linear map from $Y_0$ to
the space of all bounded right $A$-module maps from $E\otimes_A \widetilde{Z}\otimes_A Z$ to $E$
defined by
$$
\pi_{Y_0}(y)(e\otimes\widetilde{z}\otimes z_1 )=\pi(y)(e\cdot \la z, z_1 \ra_A )
$$
for any $e\in E$, $z, z_1 \in Z$. First, we claim that $\pi_{Y_0}(y)$ is adjointable for any $y\in Y_0$.
Indeed, for any $y\in Y_0$, $e, e_1 \in E$, $z, z_1 \in Z$,
\begin{align*}
\la e\, , \, \pi_{Y_0}(y)(e_1 \otimes\widetilde{z}\otimes z_1 ) \ra_A & =\la e\, , \, \pi(y)e_1 \cdot \la z, z_1 \ra_A \ra_A 
= \la \pi(y)^* e\, , \, e_1 \cdot \la z, z_1 \ra_A \ra_A \\
& =\la \pi(y)^* e \, , \, e_1 \ra_A \la z , z_1 \ra_A .
\end{align*}
Since $Z$ is full with the right $A$-valued inner product, there is a finite subset $\{u_i \}_{i=1}^n$ of $Z$ such
that $\sum_{i=1}^n \la u_i , u_i \ra_A =1$ by Kajiwara and Watatani \cite [Corollary 1.19]{KW1:bimodule}.
Then
\begin{align*}
\la \sum_{i=1}^n \pi(y)^* e\otimes\widetilde{u_i}\otimes u_i \, , \, e_1 \otimes z\otimes z_1 \ra_A
& =\sum_{i=1}^n \la \widetilde{u_i}\otimes u_i \, , \, \la \pi(y)^* e , e_1 \ra_A \cdot \widetilde{z}\otimes z \ra_A \\
& =\sum_{i=1}^n \la \widetilde{u_i }\otimes u_i \, , \, [z\cdot\la e_1 , \pi(y)^* e \ra_A ]^{\widetilde{}}\otimes z_1 \ra_A \\
& =\sum_{i=1}^n \la u_i \, , \, \la \widetilde{u_i}\, , \,
 [z\cdot \la e_1 , \pi(y)^* e \ra_A ]^{\widetilde{}}\ra_B \cdot z_1 \ra_A \\
& =\sum_{i=1}^n \la u_i \, , \, {}_B \la u_i\, , \,
 [z\cdot \la e_1 , \pi(y)^* e \ra_A ]\ra \cdot z_1 \ra_A \\
 & =\sum_{i=1}^n \la {}_B \la [z\cdot \la e_1 \, , \, \pi(y)^* e \ra_A ] \, , \, u_i \ra \cdot u_i \, , \, z_1 \ra_A \\
 & =\la \pi(y)^* e \, , \, e_1 \ra_A \la z , z_1 \ra_A .
\end{align*}
Thus $\pi_{Y_0}(y)$ is adjointable and $\pi_{Y_0}(y)^* =\sum_{i=1}^n \pi(y)^* e\otimes\widetilde{u_i}\otimes u_i $ for any
$y\in Y_0$. Hence $\pi_{Y_0}(y)\in \BB_A (E\otimes_A \widetilde{Z}\otimes_B Z , E)$ for
any $y\in Y_0$ and $\pi_{Y_0}$ is a linear map from
$Y_0$ to $\BB_A (E\otimes_A \widetilde{Z}\otimes_B Z)$. Next, we show that $\pi_{Y_0}$ satisfies Conditions (1)-(3) in
Definition \ref{def:Re2}. For any $y, y_1 \in Y_0$, $e\in E$,
\begin{align*}
\pi_{Y_0}(y)\pi_{Y_0}(y_1 )^* e & =\sum_{i=1}^n \pi_{Y_0}(y)\pi(y_1 )^* \otimes\widetilde{u_i}\otimes u_i 
=\sum_{i=1}^n \pi(y)\pi(y_1 )^* e\cdot \la u_i , u_i \ra_A \\
& =\pi({}_C \la y , y_1 \ra)e .
\end{align*}
Thus $\pi_{Y_0}(y)\pi_{Y_0}(y_1 )^* =\pi({}_C \la y , y_1 \ra)$. Also, for any $y, y_1 \in Y_0$, $e\in E$, $z, z_1 \in Z$,
\begin{align*}
\pi_{Y_0}(y)^* \pi_{Y_0}(y_1 )(e\otimes\widetilde{z}\otimes z_1 ) & =\pi_{Y_0}(y)^* \pi(y_1 )e\cdot \la z, z_1 \ra_A \\
& =\sum_{i=1}^n \pi(y)\pi(y_1 )^*e\cdot \la z, z_1 \ra_A\otimes \widetilde{u_i}\otimes u_i \\
& =\sum_{i=1}^n \pi(y)^* \pi(y_1 )e\otimes [u_i \cdot \la z_1 , z \ra_A ]^{\widetilde{}}\otimes u_i \\
& =\sum_{i=1}^n \pi(y^* y_1 )e\otimes[{}_B \la u_i \, , \, z_1 \ra\cdot z]^{\widetilde{}}\otimes u_i \\
& =\sum_{i=1}^n \pi(y^* y_1 )e\otimes \widetilde{z}\otimes{}_B \la z_1 , u_i \ra \cdot u_i \\
& =\pi(y^* y_1 )e\otimes\widetilde{z}\otimes z_1 \\
& =(\pi^{\widetilde{Z}})^Z (\la y, y_1 \ra_C ) .
\end{align*}
Furthermore, for any $y\in Y_0$, $c, d\in C$, $e\in E$, $z, z_1 \in Z$,
$$
\pi_{Y_0}(c\cdot y\cdot d)(e\otimes\widetilde{z}\otimes z_1 )=\pi(cyd)e\cdot \la z, z_1 \ra_A .
$$
On the other hand,
\begin{align*}
\pi(c)\pi_{Y_0}(y)(\pi^{\widetilde{Z}})^Z (d)(e\otimes\widetilde{z}\otimes z_1 ) & =
\pi(c)\pi_{Y_0}(y)(\pi(d)e\otimes\widetilde{z}\otimes z_1 ) \\
& =\pi(c)(\pi(yd)e\cdot \la z, z_1 \ra_A ) \\
& =\pi(cyd)e\cdot \la z, z_1 \ra_A
\end{align*}
since $\pi(c)$ is a right Hilbert $A$-module map. It follows that
$[(\pi^{\widetilde{Z}})^Z \, , \, E\otimes_A \widetilde{Z}\otimes_B Z]=[\pi, E]$
in $\Rep_A (C)/\!\sim$. Thus $\Phi\circ\Psi=\id$ on $\Rep_A (C)/\!\sim$.
Similarly, $\Psi\circ\Phi=\id$ on $\Rep_B (C)/\!\sim$. Therefore, 
$\Phi$ is a bijective correspondence between $\Rep_B (C)/\!\sim$ and $\Rep_A (C)/\!\sim$.
\end{proof}

\section{A correspondence of strong Morita equivalence classes of completely positive linear maps on
unital $C^*$-algebras}\label{sec:Co} Following Lance \cite [Chapter 5]{Lance:toolkit}, we give
the following theorem(KSGNS construction):

\begin{thm}\label{thm:Co1}$($\cite [Theorem 5.6]{Lance:toolkit}$)$ Let $D$ nad $B$ be unital $C^*$-algebras and let
$F$ be a right Hilbert $B$-module. Let $\psi$ be a completely positive linear map from $D$ to
$\BB_B (F)$ .
\newline
$(1)$ There exista right Hilbert $B$-module $F_{\psi}$, a representation $\pi_{\psi}$ of $D$ on $F_{\psi}$ and
an elemet $V_{\psi}$ in $\BB_B (F, F_{\psi})$ such that
$$
\psi(d)=V_{\psi}^* \pi_{\psi}(d)V_{\psi}
$$
for any $d\in D$ and such that $\overline{\pi_{\psi}(D)V_{\psi}F}=F_{\psi}$.
\newline
$(2)$ Let $G$ be a right Hilbert $B$-module, $\pi$ a representation of $D$ on $G$ and $W\in\BB_B (F, G)$.
We suppose that $\psi(d)=W^* \pi(d)W$ for any $d\in D$ and that $\overline{\pi(D)WF}=G$.
Then there is an isomety $U$ from $F_{\psi}$ onto $G$ such that
$$
\pi(d)=U\pi_{\psi}(d)U^*
$$
for any $d\in D$.
\end{thm}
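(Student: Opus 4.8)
The plan is to carry out the KSGNS construction, the right-Hilbert-module analogue of the Stinespring dilation built in the GNS fashion, and then to prove its uniqueness up to unitary equivalence. For part $(1)$, I would first form the algebraic tensor product over $\BC$ of the vector spaces $D$ and $F$, with right $B$-action on the second leg, $(d\otimes f)\cdot b=d\otimes(f\cdot b)$, and equip it with the $B$-valued sesquilinear form determined by
$$
\la d_1\otimes f_1\, ,\, d_2\otimes f_2\ra_B=\la f_1\, ,\, \psi(d_1^* d_2)f_2\ra_B .
$$
The decisive point is positivity of this form: for $x=\sum_{i=1}^n d_i\otimes f_i$ one has $\la x,x\ra_B=\sum_{i,j}\la f_i,\psi(d_i^* d_j)f_j\ra_B$, and since $[d_i^* d_j]_{ij}=R^* R$ with $R=(d_1,\dots,d_n)$ is positive in $M_n(D)$ and $\psi$ is completely positive, $[\psi(d_i^* d_j)]_{ij}$ is positive in $M_n(\BB_B(F))=\BB_B(F\otimes\BC^n)$, so pairing this positive operator with $(f_1,\dots,f_n)\in F\otimes\BC^n$ gives a positive element of $B$. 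By the Cauchy--Schwarz inequality for semi-inner-product $B$-modules, $N=\{x:\la x,x\ra_B=0\}$ is a $B$-submodule annihilated by the form in each variable, so the form descends to $(D\otimes F)/N$; completing yields a right Hilbert $B$-module $F_\psi$.

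On the image of $D\otimes F$ I would set $\pi_\psi(d)(d'\otimes f)=(dd')\otimes f$. Well-definedness and boundedness follow from $[(dd_i)^*(dd_j)]_{ij}=(dR)^*(dR)=R^*(d^* d)R\le\|d\|^2 R^* R$ in $M_n(D)$, which after applying $\psi^{(n)}$ and pairing with $(f_1,\dots,f_n)$ gives $\la\pi_\psi(d)x,\pi_\psi(d)x\ra_B\le\|d\|^2\la x,x\ra_B$; hence $\pi_\psi(d)$ maps $N$ into $N$, extends to $F_\psi$, and the easy identity $\pi_\psi(d)^*=\pi_\psi(d^*)$ on the dense subspace makes $\pi_\psi$ a homomorphism of $D$ into $\BB_B(F_\psi)$. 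Since $D$ is unital, put $V_\psi f=1_D\otimes f$; then $\la V_\psi f,V_\psi f\ra_B=\la f,\psi(1_D)f\ra_B\le\|\psi(1_D)\|\la f,f\ra_B$ shows $V_\psi$ is bounded, and $V_\psi^*(d\otimes f)=\psi(d)f$ exhibits its adjoint, so $V_\psi\in\BB_B(F,F_\psi)$. Consequently $V_\psi^*\pi_\psi(d)V_\psi f=V_\psi^*(d\otimes f)=\psi(d)f$, and $\overline{\pi_\psi(D)V_\psi F}=F_\psi$ because $\pi_\psi(d)V_\psi f=d\otimes f$ and such elements span a dense subspace by construction; this proves $(1)$.

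For part $(2)$, given $(G,\pi,W)$ as in the hypothesis, I would define $U$ on the dense subspace spanned by $\pi_\psi(D)V_\psi F$ by $U(d\otimes f)=\pi(d)Wf$. The computation
$$
\la\pi(d_1)Wf_1\, ,\, \pi(d_2)Wf_2\ra_B=\la f_1\, ,\, W^*\pi(d_1^* d_2)Wf_2\ra_B=\la f_1\, ,\, \psi(d_1^* d_2)f_2\ra_B
$$
identifies this with $\la d_1\otimes f_1,d_2\otimes f_2\ra_B$, so $U$ preserves inner products and extends to an isometry $F_\psi\to G$, which is onto since $\overline{\pi(D)WF}=G$. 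Finally $U\pi_\psi(d)(d'\otimes f)=\pi(dd')Wf=\pi(d)U(d'\otimes f)$ gives $U\pi_\psi(d)=\pi(d)U$, and since $U$ is a surjective isometry this is equivalent to $\pi(d)=U\pi_\psi(d)U^*$, as required.

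I expect the main obstacle to be the positivity of the $B$-valued form together with the boundedness of each $\pi_\psi(d)$: both amount to promoting matrix inequalities in $M_n(D)$ to operator inequalities in $M_n(\BB_B(F))$ via complete positivity, and then the Cauchy--Schwarz inequality for pre-Hilbert $B$-modules is exactly what legitimizes the quotient-and-complete passage. The remaining verifications are routine bookkeeping with elementary tensors.
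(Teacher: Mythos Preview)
Your proposal is correct and carries out exactly the standard KSGNS construction; the paper's own proof is a one-line citation of \cite[Theorem~5.6]{Lance:toolkit}, so you have supplied the full argument that the paper simply imports. The construction you give---forming the $B$-valued semi-inner product on $D\odot F$ via $\psi$, using complete positivity for positivity and for the bound $\la\pi_\psi(d)x,\pi_\psi(d)x\ra_B\le\|d\|^2\la x,x\ra_B$, quotienting and completing, and defining $V_\psi f=1_D\otimes f$---is precisely Lance's proof, and your uniqueness argument via the inner-product-preserving map $d\otimes f\mapsto\pi(d)Wf$ is likewise the standard one.
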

\begin{proof}
This is immediate by \cite[Theorem 5.6]{Lance:toolkit}.
\end{proof}

\begin{remark}\label{remark:Co2} The representation $(\pi_{\psi}, F_{\psi})$ is non-degenerate since
$$
F_{\psi}=\overline{\pi_{\psi}(D)V_{\psi}F}\subset \overline{\pi_{\psi}(D)F_{\psi}}\subset F_{\psi} .
$$
Also, the representation $(\pi, G)$ is non-degenerate.
\end{remark}

We call the above $(\pi_{\psi}, V_{\psi}, F_{\psi})$ a
\sl
minimal
\rm
KSGNS-representation for $\psi$.
\par
Let $C, D$ and $B$ be unital $C^*$-algebras and let $F$ be a right Hilbert $B$-module.
Let $\phi$ and $\psi$ be completely positive linear maps from $C$ and $D$ to $\BB_B(F)$, respectively.

\begin{Def}\label{def:Co3} We say that $\phi$ is
\sl
strongly Morita equivalent
\rm
to $\psi$ if a minimal KSGNS-representation for $\phi$ is strongly Morita equivalent to that for $\psi$.
We denote it by $\phi\sim\psi$.
\end{Def}

\begin{prop}\label{prop:Co4} Strong Morita equivalence for completely positive linear maps from
unital $C^*$-algebras to the $C^*$-algebras of all adjointable right Hilbert modules maps on right
Hilbert modules is an equivalence relation.
\end{prop}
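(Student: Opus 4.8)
The plan is to deduce this entirely from Proposition~\ref{prop:Re3}, which already says that strong Morita equivalence for non-degenerate representations of unital $C^*$-algebras on right Hilbert $C^*$-modules is an equivalence relation, together with the uniqueness clause of Theorem~\ref{thm:Co1}(2) and Lemma~\ref{lem:Re4}. The first thing I would record is the preliminary observation that makes Definition~\ref{def:Co3} robust: if $\psi$ is a completely positive linear map from a unital $C^*$-algebra $D$ to $\BB_B(F)$ and $(\pi_\psi, V_\psi, F_\psi)$, $(\pi_\psi', V_\psi', F_\psi')$ are two minimal KSGNS-representations for $\psi$, then by Theorem~\ref{thm:Co1}(2) there is an isometry (in fact a unitary) $U$ from $F_\psi$ onto $F_\psi'$ with $\pi_\psi' = \Ad(U)\circ\pi_\psi$; since both representations are non-degenerate by Remark~\ref{remark:Co2}, Lemma~\ref{lem:Re4} gives $(\pi_\psi, F_\psi)\sim(\pi_\psi', F_\psi')$. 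Hence the assertion ``a minimal KSGNS-representation for $\phi$ is strongly Morita equivalent to that for $\psi$'' does not depend on the chosen minimal KSGNS-representations, so $\sim$ on completely positive maps is well defined.

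For reflexivity, I would fix $\phi\in\rCP_B(C)$ and a minimal KSGNS-representation $(\pi_\phi, V_\phi, F_\phi)$; Proposition~\ref{prop:Re3} gives $(\pi_\phi, F_\phi)\sim(\pi_\phi, F_\phi)$, hence $\phi\sim\phi$. For symmetry, if $\phi\sim\psi$ with witnessing minimal KSGNS-representations $(\pi_\phi, F_\phi)$ and $(\pi_\psi, F_\psi)$ satisfying $(\pi_\phi, F_\phi)\sim(\pi_\psi, F_\psi)$, then symmetry of $\sim$ for representations (Proposition~\ref{prop:Re3}) yields $(\pi_\psi, F_\psi)\sim(\pi_\phi, F_\phi)$, so $\psi\sim\phi$.

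For transitivity, suppose $\phi\sim\psi$ and $\psi\sim\rho$, where $\phi,\psi,\rho$ are completely positive maps into $\BB_B(F)$ from unital $C^*$-algebras $C$, $D$, $L$, respectively. Choose minimal KSGNS-representations $(\pi_\phi, F_\phi)$, $(\pi_\psi, F_\psi)$, $(\pi_\psi', F_\psi')$, $(\pi_\rho, F_\rho)$ with $(\pi_\phi, F_\phi)\sim(\pi_\psi, F_\psi)$ and $(\pi_\psi', F_\psi')\sim(\pi_\rho, F_\rho)$. By the preliminary observation $(\pi_\psi, F_\psi)\sim(\pi_\psi', F_\psi')$, and then applying transitivity of $\sim$ for representations (Proposition~\ref{prop:Re3}) twice gives $(\pi_\phi, F_\phi)\sim(\pi_\psi', F_\psi')$ and hence $(\pi_\phi, F_\phi)\sim(\pi_\rho, F_\rho)$, so $\phi\sim\rho$.

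The only point beyond a direct invocation of Proposition~\ref{prop:Re3} is the independence-of-choice step, needed because Definition~\ref{def:Co3} quantifies over \emph{a} minimal KSGNS-representation rather than a canonical one; this is precisely what the KSGNS-uniqueness assertion of Theorem~\ref{thm:Co1}(2) and Lemma~\ref{lem:Re4} are for. I therefore expect no real obstacle here — the work is organizational rather than computational, and no new estimates or module constructions are required.
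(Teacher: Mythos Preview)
Your proposal is correct and follows exactly the same route as the paper: the paper's proof is the single sentence ``This is immediate by Proposition~\ref{prop:Re3}, Lemma~\ref{lem:Re4} and Theorem~\ref{thm:Co1},'' and your write-up simply unpacks that sentence, including the well-definedness observation that Theorem~\ref{thm:Co1}(2) together with Lemma~\ref{lem:Re4} makes the choice of minimal KSGNS-representation irrelevant.
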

\begin{proof} This is immediate by Proposition \ref{prop:Re3}, Lemma \ref{lem:Re4} and
Theorem \ref{thm:Co1}.
\end{proof}

Let $C$, $D$ and $B$ be unital $C^*$-algebras. We suppose that $C$ and $D$ are strongly Morita equivalent
with respect to a $C-D$-equivalence bimodule $Y$. Let $F$ be a right Hilbert $B$-module and let $\psi$ be a
completely positive linear map from $D$ to $\BB_B (F)$. Let $(\pi_C , E)$ be the non-degenerate representation of
$C$ on $E$ induced by $(\pi_{\psi}, V_{\psi}, F_{\psi})$ and $Y$. Let $\{u_i \}_{i=1}^n$ be a finite subset of $Y$.
We construct a linear map $\phi$ from $C$ to $\BB_B (F)\otimes M_n (\BC)$ in the same way as in
\cite [Section 6]{Kodaka:complete}, that is, let $\phi$ be the linear map from $C$ to $\BB_B (F)\otimes M_n (\BC)$
defined by
$$
\left[ \phi(c)_{i j} \right]_{i, j=1}^n =\left[\psi (\la u_i \, , \, c\cdot u_j \ra_D )\right]_{i, j=1}^n
$$
for any $c\in C$. In the same way as in the proof of \cite [Lemma 6.1]{Kodaka:complete}, we can see that
$\phi$ is a completely positive linear map from $C$ to $\BB_B (F)\otimes M_n (\BC)$. Let $F\otimes\BC^n$ be the
algebraic tensor product $F$ and $\BC^n$ and we regard $F\otimes \BC^n$ as a right  Hilbert $B$-module
in the natural way. We identify $\BB_B (F)\otimes M_n (\BC )$ with $\BB_B (F\otimes\BC^n )$. Also, we identify
$F\otimes\BC^n$ with $\oplus_1^n F$ as right Hilbert $B$-modules. Let $(\pi_{\phi}, Y_{\phi}, E_{\phi})$ be a
minimal KSGNS-representation for $\phi$. We show that $(\pi_C , E)$ is unitarily equivalent to
$(\pi_{\phi}, V_{\phi}, E_{\phi})$. 
\par
Let $C\odot(F\otimes\BC^n )$ be the algebraic tensor product of $C$ and $F\otimes \BC^n$. We define
a map $U$ from $C\odot(F\otimes\BC^n )$ to $E$ by setting
$$
U(c\otimes f\otimes\lambda)=\sum_{i=1}^n \lambda_i (c\cdot u_i )\otimes 1_D \otimes f
$$
for any $c\in C$, $f\in F$, $\lambda=\left[ \begin{array}{c}
\lambda_1 \\
\vdots \\
\lambda_n \end{array}\right] \in\BC^n$ and extending linearly. Then in the same way as in the proof of
\cite [Lemma 6.2]{Kodaka:complete}, we can see that $U$ is an isometry from $C\odot (F\otimes \BC^n )$
to $E$. Hence we can extend $U$ to an isometry from $E_{\phi}$ to $E$. We denote it the same symbol
$U$. From now on, we assume that $\{u_i \}_{i=1}^n$ is a left $C$-basis in $Y$. Such a finite subset
$\{u_i \}_{i=1}^n$ of $Y$ exists by Kajiwara and Watatani \cite [Corollary 1.19]{KW1:bimodule}.
Then we can see that $U$ is surjective in the same way as in the proof of \cite [Lemma 6.3]{Kodaka:complete}.
Thus, $U$ is an isometry from $E_{\phi}$ onto $E$. Furthermore, in the same way as in
\cite [Lemma 6.4]{Kodaka:complete},
we can see that $\pi_{\phi}(c)=U^* \pi_C (c)U$ for any $c\in C$. Therefore, we obtain the following
theorem:

\begin{thm}\label{thm:Co5} Let $C$, $D$ and $B$ be unital $C^*$-algebras. We suppose that $C$ and $D$
are strongly Morita equivalent with respect to a $C-D$-equivalence bimodule $Y$. Let $\{u_i \}_{i=1}^n$
be a left $C$-basis in $Y$. Let $\psi$ be a completely positive linear map from $D$ to $\BB_B (F)$, where
$F$ is a right Hilbert $B$-module. Let $\phi$ be a mp from $C$ to $\BB_B( F\otimes\BC^n )$ defined by
$$
\left[\phi(c)_{i j} \right]_{i, j=1}^n =\left[ \psi(\la u_i , c\cdot u_j \ra_D )\right]_{i, j=1}^n
$$
for any $c\in C$. Then $\phi$ is a completely positive linear map from $C$ to $\BB_B (F\otimes\BC^n )$,
which is strongly Morita equivalent to $\psi$.
\end{thm}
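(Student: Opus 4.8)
The plan is to exhibit an explicit minimal KSGNS-representation of $\phi$ and to identify it, up to unitary equivalence, with the non-degenerate representation $(\pi_C , E)$ of $C$ on $E=Y\otimes_D F_{\psi}$ induced by the minimal KSGNS-representation $(\pi_{\psi}, V_{\psi}, F_{\psi})$ of $\psi$ and the bimodule $Y$, exactly as in the discussion preceding the theorem. Granting this identification, the conclusion is formal: $(\pi_C , E)\sim(\pi_{\psi}, F_{\psi})$ by Lemma~\ref{lem:Re5}, a minimal KSGNS-representation of $\phi$ is unitarily equivalent to $(\pi_C , E)$ and hence strongly Morita equivalent to it by Lemma~\ref{lem:Re4}, and since strong Morita equivalence of non-degenerate representations is transitive (Proposition~\ref{prop:Re3}) a minimal KSGNS-representation of $\phi$ is strongly Morita equivalent to one of $\psi$; thus $\phi\sim\psi$ by Definition~\ref{def:Co3}.

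First I would check that $\phi$ is completely positive, following \cite[Lemma 6.1]{Kodaka:complete}. Linearity is clear and $\phi(c)=[\psi(\la u_i , c\cdot u_j \ra_D )]_{i,j=1}^n \in M_n (\BB_B (F))=\BB_B (F\otimes\BC^n )$. For complete positivity, fix $m\in\BN$ and $c_1 ,\dots , c_m \in C$ and put $w_{(k,i)}=c_k \cdot u_i \in Y$. The $((k,i),(l,j))$-entry of $[\phi(c_k^* c_l )]_{k,l=1}^m$, regarded as an element of $M_{mn}(\BB_B (F))$, equals $\psi(\la u_i , c_k^* c_l \cdot u_j \ra_D )=\psi(\la w_{(k,i)}, w_{(l,j)}\ra_D )$. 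Since the matrix $[\la w_{(k,i)}, w_{(l,j)}\ra_D ]$ is a Gram matrix it is a positive element of $M_{mn}(D)$, and applying the completely positive map $\psi$ entrywise preserves positivity; hence $[\phi(c_k^* c_l )]_{k,l}\ge 0$, so $\phi$ is completely positive.

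Next I would study the map $U$ from $C\odot(F\otimes\BC^n )$ to $E$ defined above, which sends $c\otimes f\otimes\lambda$ to $\sum_{i=1}^n \lambda_i (c\cdot u_i )\otimes V_{\psi}f$. The central computation, mirroring \cite[Lemma 6.2]{Kodaka:complete}, is that $U$ is isometric for the $B$-valued semi-inner product on $C\odot(F\otimes\BC^n )$ defining $E_{\phi}$: for $x=c\otimes f\otimes\lambda$ and $x'=c'\otimes f'\otimes\lambda'$, the inner-product formula on $Y\otimes_D F_{\psi}$ gives
$$
\la U(x), U(x')\ra_B =\sum_{i,j=1}^n \overline{\lambda_i}\,\lambda_j' \la f\, ,\, V_{\psi}^* \pi_{\psi}(\la c\cdot u_i \, ,\, c'\cdot u_j \ra_D )V_{\psi}f' \ra_B ,
$$
and then $\la c\cdot u_i , c'\cdot u_j \ra_D =\la u_i , c^* c'\cdot u_j \ra_D$ together with $V_{\psi}^* \pi_{\psi}(d)V_{\psi}=\psi(d)$ turns the right-hand side into $\sum_{i,j}\overline{\lambda_i}\,\lambda_j' \la f\, ,\, \psi(\la u_i , c^* c'\cdot u_j \ra_D )f' \ra_B =\la f\otimes\lambda\, ,\, \phi(c^* c')(f'\otimes\lambda')\ra_B$, which is precisely the defining semi-inner product value. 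Thus $U$ kills the null space and extends to an isometry $U\colon E_{\phi}\to E$. For surjectivity I would use that $\{u_i \}_{i=1}^n$ is a left $C$-basis, so $y=\sum_i {}_C\la y, u_i \ra\cdot u_i$ for every $y\in Y$; since $F_{\psi}=\overline{\pi_{\psi}(D)V_{\psi}F}$, the elements $y\otimes\pi_{\psi}(d)V_{\psi}f=(y\cdot d)\otimes V_{\psi}f$ span a dense subspace of $E$, and $(y\cdot d)\otimes V_{\psi}f=\sum_i \pi_C ({}_C\la y\cdot d , u_i \ra )(u_i \otimes V_{\psi}f)=\sum_i U({}_C\la y\cdot d , u_i \ra\otimes f\otimes e_i )$ lies in the range of $U$, where $e_i$ is the $i$th standard basis vector of $\BC^n$. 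As the isometric image of a complete module, $\Ima U$ is closed, hence equal to $E$, so $U$ is a unitary.

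Finally, on the generators of $E_{\phi}$ one has $U\pi_{\phi}(c)(c'\otimes f\otimes\lambda)=U((cc')\otimes f\otimes\lambda)=\sum_i \lambda_i (cc'\cdot u_i )\otimes V_{\psi}f=\pi_C (c)U(c'\otimes f\otimes\lambda)$, so $U\pi_{\phi}(c)=\pi_C (c)U$ and $\pi_{\phi}(c)=U^* \pi_C (c)U$ for all $c\in C$; this says that $(\pi_{\phi}, E_{\phi})$ and $(\pi_C , E)$ are unitarily equivalent (as in \cite[Lemma 6.4]{Kodaka:complete}). Assembling the pieces as in the first paragraph yields $\phi\sim\psi$. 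I expect the main obstacle to lie entirely in the third step, namely checking that $U$ is a well-defined isometry, since that is where the definition of $\phi$ through $\psi$ and the inner products $\la u_i , c\cdot u_j \ra_D$ must be matched with the KSGNS semi-inner product of $\phi$; the surjectivity of $U$ --- the one further point that is not purely formal --- is exactly where the left $C$-basis hypothesis enters.
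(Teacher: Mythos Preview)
Your proposal is correct and follows essentially the same route as the paper: you verify complete positivity of $\phi$ as in \cite[Lemma 6.1]{Kodaka:complete}, build the same map $U$ (your $V_{\psi}f$ is exactly the paper's $1_D\otimes f$), show it is an isometry onto $E=Y\otimes_D F_{\psi}$ using the left $C$-basis, check the intertwining $\pi_{\phi}(c)=U^*\pi_C(c)U$, and conclude via Lemmas~\ref{lem:Re4}, \ref{lem:Re5} and Proposition~\ref{prop:Re3}. The only difference is that you spell out the isometry and surjectivity computations explicitly, whereas the paper merely cites the analogous lemmas from \cite{Kodaka:complete}.
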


For any $\psi\in\rCP_B (D)$, we denote by $\mathcal{F}_0 (\psi)$ the element in $\rCP_B (C)$ defined in the above
and $\mathcal{F}_0$ denotes the map
$$
\rCP_B (D)\to \rCP_B (C): \psi\mapsto\mathcal{F}_0(\psi) .
$$
By the above theorem, we can obtain the map
$$
\rCP_B (D)/\!\sim \to \rCP_B (C)/\!\sim \, : [\psi]\mapsto[\mathcal{F}_0(\psi)] ,
$$
where $[\psi]$ and $[\mathcal{F}_0 (\psi)]$ are the strong Morita equivalence classes of $\psi$ and
$\mathcal{F}_0 (\psi)$, respectively. We denote by the symbol $\mathcal{F}$ the map from
$\rCP_B (D)/\!\sim$ to $\rCP_B (C)/\!\sim$. In the same as above, we can also obtain the map from
$\rCP_B (C)/\!\sim$ to $\rCP_B (D)/\!\sim$,  which is the inverse map of $\mathcal{F}$. 
Hence $\mathcal{F}$ is a bijective correspondence between $\rCP_B (D)/\!\sim$ and $\rCP_B (C)/\!\sim$.
Therefore, we obtain the following:

\begin{cor}\label{cor:Co6} Let $C, D, B$ be
unital $C^*$-algebras. Then $\mathcal{F}$ is a bijective correspondence between $\rCP_B (D)/\!\sim$
and $\rCP_B (C)/\!\sim$.
\end{cor}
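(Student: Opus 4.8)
The plan is to exhibit an explicit inverse for $\mathcal{F}$, built from the dual equivalence bimodule, and to reduce everything to the transitivity of strong Morita equivalence together with the key property, already recorded in Theorem~\ref{thm:Co5}, that the construction $\mathcal{F}_0$ never leaves the strong Morita equivalence class: $\mathcal{F}_0(\psi)\sim\psi$ for every $\psi\in\rCP_B(D)$.

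First I would check that $\mathcal{F}$ is well defined on classes. If $\psi_1,\psi_2\in\rCP_B(D)$ satisfy $\psi_1\sim\psi_2$, then from $\mathcal{F}_0(\psi_1)\sim\psi_1\sim\psi_2\sim\mathcal{F}_0(\psi_2)$ and Proposition~\ref{prop:Co4} we get $\mathcal{F}_0(\psi_1)\sim\mathcal{F}_0(\psi_2)$; since both lie in $\rCP_B(C)$, their classes in $\rCP_B(C)/\!\sim$ coincide. The same argument shows the class $[\mathcal{F}_0(\psi)]$ does not depend on the particular choice of left $C$-basis $\{u_i\}_{i=1}^n$ of $Y$ used to form $\mathcal{F}_0(\psi)$, so $\mathcal{F}$ is genuinely a map $\rCP_B(D)/\!\sim\,\to\,\rCP_B(C)/\!\sim$.

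Next I would construct the inverse $\mathcal{G}$. The dual module $\widetilde{Y}$ is a $D-C$-equivalence bimodule, and by Kajiwara and Watatani \cite[Corollary 1.19]{KW1:bimodule} it admits a left $D$-basis $\{v_j\}_{j=1}^m$. Applying Theorem~\ref{thm:Co5} with the roles of $C$ and $D$ interchanged and $Y$ replaced by $\widetilde{Y}$ produces, for each $\phi\in\rCP_B(C)$, a completely positive linear map $\mathcal{G}_0(\phi)$ from $D$ to $\BB_B(F\otimes\BC^m)$ with $\mathcal{G}_0(\phi)\sim\phi$; exactly as above this induces a well-defined map $\mathcal{G}\colon\rCP_B(C)/\!\sim\,\to\,\rCP_B(D)/\!\sim$. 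Finally I would verify $\mathcal{G}\circ\mathcal{F}=\id$ and $\mathcal{F}\circ\mathcal{G}=\id$. For $\psi\in\rCP_B(D)$, put $\phi=\mathcal{F}_0(\psi)\in\rCP_B(C)$; then $\mathcal{G}_0(\phi)\sim\phi\sim\psi$ by Proposition~\ref{prop:Co4}, and since $\mathcal{G}_0(\phi)$ and $\psi$ both lie in $\rCP_B(D)$ this gives $[\mathcal{G}_0(\phi)]=[\psi]$, i.e. $(\mathcal{G}\circ\mathcal{F})([\psi])=[\psi]$. The reverse composition is handled symmetrically. Hence $\mathcal{F}$ is bijective with inverse $\mathcal{G}$.

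I do not expect a serious obstacle here: the real content is already in Theorem~\ref{thm:Co5} (and behind it the uniqueness part of the KSGNS construction, Theorem~\ref{thm:Co1}, together with Lemmas~\ref{lem:Re5} and~\ref{lem:Re6}), and the corollary is essentially formal bookkeeping on top of it. The only point requiring a little care is that the relation $\sim$ of Definition~\ref{def:Co3} a priori relates completely positive maps out of different $C^*$-algebras, so at each step one must make sure the two maps being identified actually lie in the same set $\rCP_B(D)$ or $\rCP_B(C)$ before concluding that their $\sim$-classes there coincide; this is exactly why $\mathcal{F}_0$ and $\mathcal{G}_0$ are applied in the correct order in the composition arguments.
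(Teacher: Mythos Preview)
Your proposal is correct and follows essentially the same approach as the paper: the paper likewise observes that Theorem~\ref{thm:Co5} gives $\mathcal{F}_0(\psi)\sim\psi$, so $\mathcal{F}$ is well defined, and then constructs the inverse ``in the same way as above'' via the dual bimodule $\widetilde{Y}$, with bijectivity following from transitivity of $\sim$. You have simply written out explicitly (including the basis-independence and the care about which set $\rCP_B(\cdot)$ the maps lie in) what the paper leaves implicit.
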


Let $C, A$ and $B$ be unital $C^*$-algebras. We suppose that $A$ and $B$ are strongly Morita
equivalent with respect to a $B-A$-equivalence bimodule $Z$. Let $\phi$ be a completely positive
liear map from $C$ to $\BB_B (F)$, where $F$ is a right Hilbert $B$-module. Let $\phi^Z$ be the linear map
from $C$ to $\BB_A (F\otimes_B Z)$ defined by
$$
\phi^Z (c)(f\otimes z)=\phi(c)(f)\otimes z =(\phi\otimes\id_Z )(c)(f\otimes z)
$$
for any $c\in C$, $f\in F$, $z\in Z$. Also, let $(\pi_{\phi}, V_{\phi}, F_{\phi})$ be a minimal KSGNC-representation
for $\phi$. Let $(\pi_{\phi}^Z ,F_{\phi}\otimes_B Z)$ be the non-degenerate representation of $C$
on the right Hilbert $A$-module $F_{\phi}\otimes_B Z$ defined in Section \ref{sec:Re}.

\begin{lemma}\label{lem:Co7} With the above notation, for any $c\in C$,
$$
\phi^Z (c)=(V_{\phi}^* \otimes\id_Z )\pi_{\phi}^Z (c)(V_{\phi}\otimes\id_Z )
$$
and
$$
\overline{\pi_{\phi}^Z (C)(V_{\phi}\otimes\id_Z )(F\otimes_B Z)}=F_{\phi}\otimes_B Z .
$$
\end{lemma}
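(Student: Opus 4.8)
The plan is to verify the two displayed identities directly from the definitions, using the KSGNS decomposition $\phi(c)=V_\phi^*\pi_\phi(c)V_\phi$ together with the definition of $\pi_\phi^Z$ as the representation on $F_\phi\otimes_B Z$ given by $\pi_\phi^Z(c)(\xi\otimes z)=\pi_\phi(c)\xi\otimes z$. First I would record that under the identification $\BB_A(F\otimes_B Z)\cong \BB_B(F)\otimes\id_Z$ (more precisely, $T\otimes\id_Z$ for $T\in\BB_B(F)$), the operator $V_\phi\otimes\id_Z$ indeed lies in $\BB_A(F\otimes_B Z,\, F_\phi\otimes_B Z)$ with adjoint $V_\phi^*\otimes\id_Z$; this is the standard fact that internal tensoring with $\id_Z$ is a functor on adjointable operators, so $(S\otimes\id_Z)^*=S^*\otimes\id_Z$ and $(ST)\otimes\id_Z=(S\otimes\id_Z)(T\otimes\id_Z)$.

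For the first identity, I would evaluate both sides on a simple tensor $f\otimes z$ with $f\in F$, $z\in Z$:
\begin{align*}
(V_\phi^*\otimes\id_Z)\pi_\phi^Z(c)(V_\phi\otimes\id_Z)(f\otimes z)
&=(V_\phi^*\otimes\id_Z)\pi_\phi^Z(c)(V_\phi f\otimes z)\\
&=(V_\phi^*\otimes\id_Z)(\pi_\phi(c)V_\phi f\otimes z)\\
&=V_\phi^*\pi_\phi(c)V_\phi f\otimes z=\phi(c)f\otimes z=\phi^Z(c)(f\otimes z),
\end{align*}
and since such simple tensors have dense span in $F\otimes_B Z$ and all operators involved are bounded, the identity $\phi^Z(c)=(V_\phi^*\otimes\id_Z)\pi_\phi^Z(c)(V_\phi\otimes\id_Z)$ follows for every $c\in C$.

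For the second identity, I would use that $\overline{\pi_\phi(C)V_\phi F}=F_\phi$ from the KSGNS construction in Theorem \ref{thm:Co1}. The subspace $\pi_\phi^Z(C)(V_\phi\otimes\id_Z)(F\otimes_B Z)$ is the linear span of elements $\pi_\phi(c)V_\phi f\otimes z$, which is precisely the algebraic tensor product $(\pi_\phi(C)V_\phi F)\odot Z$ sitting inside $F_\phi\otimes_B Z$. Taking closures, the density of $\pi_\phi(C)V_\phi F$ in $F_\phi$ together with the fact that the canonical image of $F_\phi\odot Z$ is dense in the internal tensor product $F_\phi\otimes_B Z$ gives $\overline{\pi_\phi^Z(C)(V_\phi\otimes\id_Z)(F\otimes_B Z)}=F_\phi\otimes_B Z$; here one uses the continuity of the balanced tensor map $F_\phi\times Z\to F_\phi\otimes_B Z$ and that a dense subspace of $F_\phi$ tensored with $Z$ stays dense (which follows from the norm estimate $\|\xi\otimes z\|\le\|\xi\|\,\|z\|$ in the tensor product).

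The computations here are entirely routine; the only point requiring a little care — and hence the main obstacle, such as it is — is the bookkeeping of the identifications $\BB_B(F)\otimes M_n(\BC)\cong\BB_B(F\otimes\BC^n)$ and $\BB_A(F\otimes_B Z)$ versus operators of the form $T\otimes\id_Z$, and confirming that $V_\phi\otimes\id_Z$ is genuinely adjointable with the expected adjoint rather than merely bounded. Once the functoriality of $-\otimes\id_Z$ on the categories of Hilbert modules and adjointable maps is invoked (as already used implicitly in Lemma \ref{lem:Re6}), both displays drop out immediately.
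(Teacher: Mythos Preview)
Your proof is correct and follows essentially the same route as the paper: both verify the first identity by evaluating on simple tensors $f\otimes z$ and using $\phi(c)=V_\phi^*\pi_\phi(c)V_\phi$, and both deduce the second identity from $\overline{\pi_\phi(C)V_\phi F}=F_\phi$ together with density of simple tensors in the internal tensor product. Your version is slightly more explicit about why $V_\phi\otimes\id_Z$ is adjointable and about the density step, but the argument is the same.
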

\begin{proof}
For any $c\in C$, $f\in F$, $z\in Z$,
\begin{align*}
(V_{\phi}^* \otimes\id_Z )\pi_{\phi}^Z (c)(V_{\phi}\otimes\id_Z )(f\otimes z) &=(V_{\phi}^* \otimes\id_Z )\pi_{\phi}^Z (c)(V_{\phi}f\otimes z) \\
& =(V_{\phi}^* \otimes\id_Z )(\pi_{\phi}(c)V_{\phi}F \otimes z) \\
& =V_{\phi}^* \pi_{\phi}(c)V_{\phi}F \otimes z \\
& =\phi(c)(f)\otimes z \\
& =\phi^Z (c)(f\otimes z) .
\end{align*}
Hence
$$
\phi^Z (c)=(V_{\phi}^* \otimes\id_Z )\pi_{\phi}^Z (c)(V_{\phi}\otimes\id_Z )
$$
for any $c\in C$ and clearly $V_{\phi}\otimes\id\in \BB_A (F\otimes_B Z \, , \, F_{\phi}\otimes_B Z )$.
Thus $\phi^Z$ is completely positive by \cite [Proposition 5.5]{Lance:toolkit}.
Also,
$$
\overline{\pi_{\phi}^Z (C)(V_{\phi}\otimes\id_Z )(F\otimes_B Z)}=
\overline{\pi_{\phi}(C)V_{\phi}F\otimes_B Z}=\overline{F_{\phi}\otimes_B Z}
$$
since $\overline{\pi_{\phi}(C)V_{\phi}F}=F_{\phi}$.
\end{proof}

By Lemma \ref{lem:Co7}, $(\pi_{\phi}^Z , V_{\phi}\otimes\id_Z, F_{\phi}\otimes_B Z )$ is a minimal 
KSGNS-representation for $\phi^Z$.
\par
We denote by $\widetilde{\Phi}$, the map $[\phi]\in\rCP_B (C)/\!\sim\,\mapsto [\phi^Z ]\in\rCP_A (C)/\!\sim$,
where $[\phi]$ and $[\phi^Z ]$ are the strong Morita equivalence classes of $\phi\in\rCP_B (C)$ and
$\phi^Z \in \rCP_A (C)$, respectively.

\begin{lemma}\label{lem:Co8} With the above notation, $\widetilde{\Phi}$ is well-defined and $\widetilde{\Phi}$ is a
bijective correspondence between $\rCP_B (C)/\!\sim$ and $\rCP_A (C)/\!\sim$.
\end{lemma}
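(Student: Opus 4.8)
The plan is to realize $\widetilde{\Phi}$ as a conjugate of the bijection $\Phi$ of Lemma \ref{lem:Re6} by the ``pass to a minimal KSGNS-representation'' maps. For the $C^*$-algebras in sight define
$$
\rho_B \colon \rCP_B(C)/\!\sim \, \longrightarrow \, \Rep_B(C)/\!\sim \, , \qquad \rho_B([\phi]) = [\pi_\phi, F_\phi],
$$
where $(\pi_\phi, V_\phi, F_\phi)$ is a minimal KSGNS-representation for $\phi$, and define $\rho_A$ analogously. First I would check that $\rho_B$ is well-defined and injective: by Theorem \ref{thm:Co1}(2) any two minimal KSGNS-representations of a fixed $\phi$ are unitarily equivalent, hence strongly Morita equivalent by Lemma \ref{lem:Re4}, so $[\pi_\phi, F_\phi]$ does not depend on the chosen representation; and by Definition \ref{def:Co3}, $\phi \sim \phi_1$ exactly when $[\pi_\phi, F_\phi] = [\pi_{\phi_1}, F_{\phi_1}]$. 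The same reasoning applies to $\rho_A$.

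Next I would show $\rho_B$ is surjective. Given $(\pi, G) \in \Rep_B(C)$, the homomorphism $\pi \colon C \to \BB_B(G)$ is completely positive, so $\pi \in \rCP_B(C)$; and since $(\pi, G)$ is non-degenerate we have $\pi(c) = \id_G^* \pi(c) \id_G$ for all $c$ and $\overline{\pi(C)\id_G G} = G$, so $(\pi, \id_G, G)$ is a minimal KSGNS-representation for $\pi$. Hence $\rho_B([\pi]) = [\pi, G]$, and $\rho_B$ is surjective; likewise for $\rho_A$. Thus $\rho_B$ and $\rho_A$ are bijective correspondences.

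Finally, for any completely positive $\phi$ representing a class in $\rCP_B(C)/\!\sim$, with minimal KSGNS-representation $(\pi_\phi, V_\phi, F_\phi)$, Lemma \ref{lem:Co7} identifies $(\pi_\phi^Z, V_\phi \otimes \id_Z, F_\phi \otimes_B Z)$ as a minimal KSGNS-representation for $\phi^Z$, so that
$$
\rho_A([\phi^Z]) = [\pi_\phi^Z, F_\phi \otimes_B Z] = \Phi([\pi_\phi, F_\phi]) = \Phi(\rho_B([\phi])),
$$
the middle equality being the definition of $\Phi$ in Lemma \ref{lem:Re6}. Since $\rho_B$ is well-defined and $\rho_A$ is injective, this forces $[\phi^Z]$ to depend only on $[\phi]$, i.e.\ $\widetilde{\Phi}$ is well-defined, and the displayed identity becomes $\rho_A \circ \widetilde{\Phi} = \Phi \circ \rho_B$. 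As $\rho_B$, $\Phi$, $\rho_A$ are all bijective correspondences, $\widetilde{\Phi} = \rho_A^{-1} \circ \Phi \circ \rho_B$ is a bijective correspondence between $\rCP_B(C)/\!\sim$ and $\rCP_A(C)/\!\sim$. The only step that is not purely formal is the surjectivity of $\rho_B$, namely the remark that every non-degenerate representation on a right Hilbert module arises as the minimal KSGNS-representation of a ($*$-homomorphic) element of $\rCP_B(C)$; once this together with Lemmas \ref{lem:Re6} and \ref{lem:Co7} is in hand, everything else is diagram-chasing.
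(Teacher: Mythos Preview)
Your argument is correct and follows the same route the paper intends: reduce strong Morita equivalence of completely positive maps to that of their minimal KSGNS-representations via Definition \ref{def:Co3}, use Lemma \ref{lem:Co7} to identify the KSGNS data of $\phi^Z$ with the $Z$-tensoring of the KSGNS data of $\phi$, and then invoke the bijection $\Phi$ of Lemma \ref{lem:Re6}. The paper's one-line proof (``immediate by Lemmas \ref{lem:Re6} and \ref{lem:Co7}'') leaves this reduction implicit and, for bijectivity, tacitly relies on the symmetric construction with $\widetilde{Z}$ already carried out in Lemma \ref{lem:Re6}; your version instead packages the reduction through the explicit bijections $\rho_B$, $\rho_A$ and supplies the easy extra observation that every non-degenerate representation is the minimal KSGNS-representation of itself viewed as a completely positive map, which gives surjectivity of $\rho_B$ and lets you write $\widetilde{\Phi}=\rho_A^{-1}\circ\Phi\circ\rho_B$ cleanly.
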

\begin{proof}
This is immediate by Lemmas \ref{lem:Re6} and \ref{lem:Co7}.
\end{proof}

We give a similar result to \cite [Corollary 6.6]{Kodaka:complete}.

\begin{thm}\label{thm:Co9} Let $C$, $A$ and $D$, $B$ be unital $C^*$-algebras.
We suppose that $C$ and $A$ are strongly Morita equivalent to $D$ and $B$, respectively.
Then there exists a bijective correspondence between $\rCP_B (D)/\!\sim$ and $\rCP_A (C)/\!\sim$.
\end{thm}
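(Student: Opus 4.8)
The plan is to obtain the desired bijection by composing the two bijective correspondences already constructed. Concretely, by Corollary \ref{cor:Co6} applied to the unital $C^*$-algebras $C$, $D$, $B$ and the $C-D$-equivalence bimodule witnessing their strong Morita equivalence, we get a bijective correspondence $\mathcal{F}\colon \rCP_B (D)/\!\sim\ \to\ \rCP_B (C)/\!\sim$. On the other hand, since $A$ and $B$ are strongly Morita equivalent, fix a $B-A$-equivalence bimodule $Z$; then Lemma \ref{lem:Co8} yields a bijective correspondence $\widetilde{\Phi}\colon \rCP_B (C)/\!\sim\ \to\ \rCP_A (C)/\!\sim$. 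The composite $\widetilde{\Phi}\circ\mathcal{F}\colon \rCP_B (D)/\!\sim\ \to\ \rCP_A (C)/\!\sim$ is then a bijective correspondence, being a composition of bijections.

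So the proof is essentially a one-line assembly. The only point requiring a word of care is that Corollary \ref{cor:Co6} is stated for a fixed triple of unital $C^*$-algebras without explicitly naming the equivalence bimodule in the statement, but its proof (via Theorem \ref{thm:Co5}, which does fix a $C-D$-equivalence bimodule $Y$ and a left $C$-basis $\{u_i\}_{i=1}^n$ of $Y$) uses exactly the hypothesis that $C$ and $D$ are strongly Morita equivalent; so invoking it here is legitimate once we have chosen such a $Y$. Similarly, Lemma \ref{lem:Co8} depends on a choice of $B-A$-equivalence bimodule $Z$, which we are free to fix at the outset. Neither choice affects the final conclusion, since we only claim the existence of a bijective correspondence.

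I do not anticipate any genuine obstacle: all the analytic content — the KSGNS construction, the induced-representation machinery of Section \ref{sec:Re}, and the verification that the relevant maps are completely positive and that the correspondences are mutually inverse — has already been carried out in Theorem \ref{thm:Co5}, Corollary \ref{cor:Co6}, Lemma \ref{lem:Re6}, and Lemma \ref{lem:Co7}. The present statement is purely the transitivity-type consequence of chaining those two correspondences, so the write-up will simply read: ``This is immediate from Corollary \ref{cor:Co6} and Lemma \ref{lem:Co8}'' — after fixing a $C-D$-equivalence bimodule and a $B-A$-equivalence bimodule and forming the composite $\widetilde{\Phi}\circ\mathcal{F}$.
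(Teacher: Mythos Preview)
Your proposal is correct and follows exactly the same approach as the paper: compose the bijection from Corollary~\ref{cor:Co6} (between $\rCP_B(D)/\!\sim$ and $\rCP_B(C)/\!\sim$) with the bijection from Lemma~\ref{lem:Co8} (between $\rCP_B(C)/\!\sim$ and $\rCP_A(C)/\!\sim$). Your additional remarks about fixing the equivalence bimodules $Y$ and $Z$ are accurate but not strictly needed, since the paper's proof is equally terse.
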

\begin{proof} By Corollary \ref{cor:Co6}, there is a bijective correspondence between $\rCP_B (D)/\!\sim$ and
$\rCP_B (C)/\!\sim$. Also, by Lemma \ref{lem:Co8}, there is a bijective correspondence
between $\rCP_B (C)/\!\sim$ and $\rCP_A (C)/\!\sim$. Thus, we obtain a bijective correspondence between
$\rCP_B (D)/\!\sim$ and $\rCP_A (C)/\!\sim$.
\end{proof}

\section{GNS-$C^*$-correspondences and strong Morita equivalence}\label{sec:GNS}
Let $C, D$ and $A, B$ be unital $C^*$-algebras. Let $\phi$ be a completely positive linear map
from $C$ to $A$. In the same way as the KSGNS-construction, we can construct a $C-A$-correspondence
$E_{\phi}$. Following Marrero and Muhly \cite {MM:CP-algebra},
we call $E_{\phi}$ a
\sl
GNS-$C^*$-correspondence
\rm
induced by $\phi$. Also, let $\psi$ be a completely positive linear map from $D$ to $B$ and
let $E_{\psi}$ be the GNS-$C^*$-correspondence induced by $\psi$, which is a $D-B$-correspodence.
Following Muhly and Solel \cite {MS:tensor}, we give the following definition:

\begin{Def}\label{def:GNS0} We say that $E_{\phi}$ is
\sl
strongly Morita equivalent to $E_{\psi}$
\rm
if there exist a $C-D$-equivalence bimodule $Y$ and an $A-B$-equivalence bimodule $X$
such that $E_{\phi}\otimes_A X$ is isomorphic to $Y\otimes_D E_{\psi}$ as $C-B$-correspondences,
that is, there is a linear map $\Phi$ from $E_{\phi}\otimes_A X$ onto $Y\otimes_D E_{\psi}$
satisfying the following:
\newline
(1) $\Phi(c\cdot x\cdot b)=c\cdot \Phi(x)\cdot b$,
\newline
(2) $\la \Phi(x), \Phi(y) \ra_B =\la x, y \ra_B$,
\newline
for any $x, y\in E_{\phi}\otimes_A X$, $c\in C$, $b\in B$.
\end{Def}

Let $\phi$ be a completely positive
linear map from $C$ to $A$ and $E_{\phi}$ the $C-A$-correspondence induced by $\phi$.
We regard $A$ as the trivial right Hilbert $A$-module in the natural way and $\phi$ can be regarded
as a completely positive linear map from 
$C$ to $\BB_A (A)$. Also $E_{\phi}$ can be regarded as a right Hilbert $A$-module in a minimal
KSGNS-representation for $\phi$.
\par
Let $\psi$ be a completely positive linear map from $D$ to $B$ and $E_{\psi}$ the $D-B$-correspondence
induced by $\psi$. We also regard $E_{\psi}$ as a right Hilbert $B$-module in a minimal KSGNS-representation for
$\psi$.
\par
We suppose that there is an $A-B$-equivalence bimodule $X$. Then $A\cong \BB_B (X)$ as $C^*$-algebras by
the isomorphism $T$ of $A$ onto $\BB_B (X)$ defined by
$$
T_a (x)=a\cdot x
$$
for any $a\in A$, $x\in X$. Let $\phi_T =T\circ \phi$, a completely positive linear map from $C$ to $\BB_B (X)$.
First, we show that
if $\phi_T $ and $\psi$ are strongly Morita equivalent as completely positive linear maps,
then $E_{\phi}$ and $E_{\psi}$ are strongly Morita equivalent as GNS-$C^*$-correspondences.
Since $X$ is an $A-B$-equivalence bimodule and $B$ is unital, there is a finite subset
$\{v_j \}_{j=1}^m$ of $X$ such that
$$
\sum_{j=1}^m \la v_j , v_j \ra_B =1_B
$$
by the proof of \cite [Corollary 1.19]{KW1:bimodule}. Let $(\pi_{\phi_T}, V_{\phi_T}, E_{\phi_T})$ be
a minimal KSGNS-representation for $\phi_T$.

\begin{lemma}\label{lem:GNS1} With the above notation, let $\Psi$
be a linear map from $E_{\phi}\otimes_A X$ to $E_{\phi_T}$ defined by
$$
\Psi((c\otimes a)\otimes x)=c\otimes(a\cdot x)
$$
for any $a\in A$, $c\in C$, $x\in X$. Then $\Psi$ is a $C-B$-correspondence isomorphism
of $E_{\phi}\otimes_A X$ onto $E_{\phi_T}$. Furthermore, $\Psi^{-1}$ is given by
$$
\Psi^{-1}(c\otimes x)=\sum_{j=1}^m c\otimes{}_A \la x, v_j \ra\otimes v_j
$$
for any $c\in C$, $x\in X$, where $E_{\phi_T}$ is regarded as a $C-B$-correspondence.
\end{lemma}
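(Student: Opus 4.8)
The plan is to make the (K)SGNS constructions explicit and then verify directly that $\Psi$ is an isometric $C-B$-bimodule map with dense range whose inverse is the stated formula. Regarding $A$ as the trivial right Hilbert $A$-module, $E_{\phi}$ is (in the minimal KSGNS sense) the closed linear span of the simple tensors $c\otimes a$ ($c\in C$, $a\in A$) with $\la c_1\otimes a_1, c_2\otimes a_2\ra_A=a_1^*\phi(c_1^* c_2)a_2$, left $C$-action $c'\cdot(c\otimes a)=c'c\otimes a$, right $A$-action $(c\otimes a)\cdot a'=c\otimes aa'$; similarly $E_{\phi_T}$ is the closed span of the simple tensors $c\otimes x$ ($c\in C$, $x\in X$) with $\la c_1\otimes x_1, c_2\otimes x_2\ra_B=\la x_1, \phi(c_1^* c_2)\cdot x_2\ra_B$, left $C$-action $c'\cdot(c\otimes x)=c'c\otimes x$, right $B$-action $(c\otimes x)\cdot b=c\otimes(x\cdot b)$; and on $E_{\phi}\otimes_A X$ the inner product is $\la\xi_1\otimes x_1, \xi_2\otimes x_2\ra_B=\la x_1, \la\xi_1,\xi_2\ra_A\cdot x_2\ra_B$, with the left $C$- and right $B$-actions on the first, resp.\ second, leg. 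Since the assignment $((c\otimes a)\otimes x)\mapsto c\otimes(a\cdot x)$ is bilinear in $(c\otimes a)$ and $x$ and sends both $((c\otimes a)\cdot a')\otimes x$ and $(c\otimes a)\otimes(a'\cdot x)$ to $c\otimes(aa'\cdot x)$, it descends to a well-defined linear map $\Psi$ on the dense subspace of $E_{\phi}\otimes_A X$ spanned by such elements.

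The heart of the argument is the inner-product identity. For $\xi_i=(c_i\otimes a_i)\otimes x_i$ ($i=1,2$),
\begin{align*}
\la\Psi(\xi_1), \Psi(\xi_2)\ra_B
&=\la c_1\otimes(a_1\cdot x_1), c_2\otimes(a_2\cdot x_2)\ra_B \\
&=\la a_1\cdot x_1, \phi(c_1^* c_2)\cdot(a_2\cdot x_2)\ra_B \\
&=\la x_1, (a_1^*\phi(c_1^* c_2)a_2)\cdot x_2\ra_B ,
\end{align*}
using associativity of the left $A$-action on $X$ and $\la a\cdot x, y\ra_B=\la x, a^*\cdot y\ra_B$; since $\la c_1\otimes a_1, c_2\otimes a_2\ra_A=a_1^*\phi(c_1^* c_2)a_2$, this equals $\la\xi_1,\xi_2\ra_B$. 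Hence $\Psi$ is isometric on the dense subspace above and extends uniquely to an isometry $E_{\phi}\otimes_A X\to E_{\phi_T}$; by continuity the extension preserves inner products, and from $\Psi(c'\cdot((c\otimes a)\otimes x)\cdot b)=c'c\otimes((a\cdot x)\cdot b)=c'\cdot\Psi((c\otimes a)\otimes x)\cdot b$ it is a $C-B$-bimodule map. Its range contains every $c\otimes x=\Psi((c\otimes 1_A)\otimes x)$, hence is dense, and being the range of an isometry of Hilbert $C^*$-modules it is closed; therefore $\Psi$ is onto, i.e.\ a $C-B$-correspondence isomorphism.

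It remains to identify $\Psi^{-1}$. Define $\Xi$ on $C\odot X$ by $\Xi(c\otimes x)=\sum_{j=1}^m(c\otimes{}_A\la x, v_j\ra)\otimes v_j$, extended bilinearly; this makes sense since ${}_A\la x, v_j\ra\in A$, so $c\otimes{}_A\la x, v_j\ra\in E_{\phi}$. Using ${}_A\la x, v_j\ra\cdot v_j=x\cdot\la v_j, v_j\ra_B$ and $\sum_{j=1}^m\la v_j, v_j\ra_B=1_B$,
$$
(\Psi\circ\Xi)(c\otimes x)=\sum_{j=1}^m c\otimes({}_A\la x, v_j\ra\cdot v_j)=c\otimes\Big(x\cdot\sum_{j=1}^m\la v_j, v_j\ra_B\Big)=c\otimes x .
$$
As $\Psi$ is isometric, $\Xi$ is isometric on this dense subspace, hence extends continuously to $E_{\phi_T}$ and $\Psi\circ\Xi=\id$ persists on all of $E_{\phi_T}$; combined with the bijectivity of $\Psi$ this forces $\Xi=\Psi^{-1}$, which is the asserted formula.

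The step most prone to error is not a genuine obstacle but the bookkeeping: three distinct internal tensor products are in play ($E_{\phi}$ and $E_{\phi_T}$ over $\BC$ in the KSGNS sense, and $E_{\phi}\otimes_A X$ balanced over $A$), and one must confirm that the two $B$-valued inner products genuinely coincide and that the formula for $\Psi^{-1}$ both lands in $E_{\phi}\otimes_A X$ and recovers $c\otimes x$ through the partition of unity $\{v_j\}_{j=1}^m$ supplied by \cite[Corollary 1.19]{KW1:bimodule}.
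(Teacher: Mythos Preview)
Your proof is correct and follows essentially the same approach as the paper: both verify directly that $\Psi$ preserves the $B$-valued inner products and is a $C$-$B$-bimodule map, then check the inverse formula via the identity ${}_A\la x, v_j\ra\cdot v_j = x\cdot\la v_j, v_j\ra_B$ and $\sum_j \la v_j, v_j\ra_B = 1_B$. Your version is in fact more careful than the paper's, which simply asserts ``by easy computations, $\Psi\circ\Psi^{-1}=\id$ and $\Psi^{-1}\circ\Psi=\id$'' without spelling out well-definedness or surjectivity.
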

\begin{proof} Let $c, c_1 \in C$, $a, a_1\in A$, $x, x_1 \in X$. Then
\begin{align*}
\la \Psi((c\otimes a)\otimes x \, , \, (c_1 \otimes a_1 )\otimes x_1 \ra_B & =
\la c\otimes a\cdot x \, , \, c_1 \otimes a_1 \cdot x_1 \ra_B \\
& =\la a\cdot x \, ,\, \phi_T (c^* c_1 )(a_1 \cdot x) \ra_B \\
& =\la x \, , \, a^* \phi(c^* c_1 )a_1 \cdot x \ra_B .
\end{align*}
On the other hand,
\begin{align*}
\la (c\otimes a )\otimes x \, , \, (c_1 \otimes a_1 )\otimes x_1 \ra_B & =
\la x \, , \la c\otimes a\, , \, c_1 \otimes a_1 \ra_A \cdot x_1 \ra_B \\
& =\la x\, , \, \la a \, , \, \phi(c^* c_1 )a_1 \ra_A \cdot x_1 \ra_B \\
& =\la x\, , \, a^* \phi(c^* c_1 )a_1 \cdot x_1 \ra_B .
\end{align*}
Hence $\Psi$ preserves the right $B$-valued inner products. Also, let $c, c_1 \in C$, $a\in A$, $x\in X$, $b\in B$.
Then
\begin{align*}
\Psi(c_1 \cdot (c\otimes a)\otimes x\cdot b) & =\Psi((c_1 c\otimes a)\otimes (x\cdot b)) \\
& =c_1 c\otimes (a\cdot x\cdot b) \\
& =c_1 \cdot (c\otimes a\cdot x)\cdot b \\
& =c_1 \cdot \Psi((c\otimes a )\otimes x)\cdot b .
\end{align*}
Thus, $\Psi$ is a $C-B$-bimodule map. Furthermore, by easy computations,
$\Psi\circ\Psi^{-1}=\id_{E_{\phi_T}}$ and $\Psi^{-1}\circ\Psi=\id_{E_{\phi}\otimes_A X}$.
Therefore, we obtain the conclusion.
\end{proof}

Let  $(\pi_{\psi}, V_{\psi}, E_{\psi})$ be a minimal
KSGNS-representations for $\psi$. We also regard $E_{\psi}$ as a $D-B$-correspondence for $\psi$.
We suppose that $\phi_T$ and $\psi$ are
strongly Morita equivalent as completely positive linear maps with respect to a $C-D$-equivalence
bimodule $Y$ and a linear map $\pi_Y$ from $Y$ to $\BB_B (E_{\psi}, E_{\phi_T})$ satisfying
Conditions (1)-(3) in Definition \ref{def:Re2}. 

\begin{lemma}\label{lem:GNS2} With the above notation, let $\Phi$ be the linear map from $Y\otimes_D E_{\psi}$
to $E_{\phi_T}$ defined by
$$
\Phi(y\otimes(d\otimes b))=\pi_Y (y)(d\otimes b)
$$
for any $y\in Y$, $d\in D$, $b\in B$. Then $\Phi$ is a $C-B$-correspondence isomorphism of $Y\otimes_D E_{\psi}$
onto $E_{\phi_T}$.
\end{lemma}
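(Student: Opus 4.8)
The plan is to verify that $\Phi$ is a well-defined $C$--$B$-bimodule map that preserves the right $B$-valued inner products and is surjective; since a $C$--$B$-correspondence isomorphism is exactly such a map, this suffices. The key observation driving all three verifications is that $\pi_Y$ already carries the equivalence-bimodule structure of $Y$ into $\BB_B(E_\psi, E_{\phi_T})$ via Conditions (1)--(3) of Definition \ref{def:Re2}, so that $\Phi = \pi_Y \otimes_D \id$ on the balanced tensor product $Y\otimes_D E_\psi$ when $E_\psi$ is viewed through the representation $(\pi_\psi, E_\psi)$. First I would check that $\Phi$ is compatible with the balancing relation, i.e. that $\Phi(y\cdot d \otimes \xi) = \Phi(y\otimes \pi_\psi(d)\xi)$ for $d\in D$; this follows from Condition (3), $\pi_Y(y\cdot d) = \pi_Y(y)\pi_\psi(d)$ (taking $c=1_C$), so $\Phi$ descends to the internal tensor product. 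Linearity and boundedness are then automatic, the latter because each $\pi_Y(y)$ is bounded and the internal-tensor-product norm is controlled in the usual way.

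Next I would verify the two structural identities. For the right $B$-module and left $C$-module actions: given $c\in C$, $b\in B$, $y\in Y$, $\xi = d\otimes b'\in E_\psi$, we have $\Phi(c\cdot y \otimes \xi) = \pi_Y(c\cdot y)\xi = \pi_C(c)\pi_Y(y)\xi$ by Condition (3) with $d = 1_D$ (here $\pi_C = \pi_{\phi_T}$ is the representation in the minimal KSGNS-triple for $\phi_T$), which equals $c\cdot \Phi(y\otimes\xi)$ in $E_{\phi_T}$; and $\Phi(y\otimes \xi\cdot b) = \pi_Y(y)(\xi\cdot b) = (\pi_Y(y)\xi)\cdot b = \Phi(y\otimes\xi)\cdot b$ since $\pi_Y(y)$ is a right $B$-module map. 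For the inner products, compute for $y,z\in Y$ and $\xi,\eta\in E_\psi$:
\begin{align*}
\la \Phi(y\otimes\xi)\, ,\, \Phi(z\otimes\eta)\ra_B
&= \la \pi_Y(y)\xi\, ,\, \pi_Y(z)\eta\ra_B
= \la \xi\, ,\, \pi_Y(y)^*\pi_Y(z)\eta\ra_B \\
&= \la \xi\, ,\, \pi_\psi(\la y, z\ra_D)\eta\ra_B
= \la \xi\, ,\, \la y, z\ra_D\cdot\eta\ra_B
= \la y\otimes\xi\, ,\, z\otimes\eta\ra_B,
\end{align*}
using Condition (2) and the definition of the $B$-valued inner product on $Y\otimes_D E_\psi$. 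By linearity and continuity this gives inner-product preservation on all of $Y\otimes_D E_\psi$.

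The remaining point, and the one I expect to be the main obstacle, is surjectivity of $\Phi$. One knows $E_{\phi_T} = \overline{\pi_{\phi_T}(C)V_{\phi_T}E}$ where $E$ is the trivial Hilbert $B$-module on which $\phi_T$ acts, so it is enough to show the range of $\Phi$ is a closed subspace containing $\pi_{\phi_T}(C)V_{\phi_T}E$. I would argue this by combining Condition (1), $\pi_Y(y)\pi_Y(z)^* = \pi_{\phi_T}({}_C\la y, z\ra)$, with the fact that $Y$ is full as a left $C$-module (Kajiwara--Watatani \cite[Corollary 1.19]{KW1:bimodule}): choosing $\{u_i\}$ with $\sum_i {}_C\la u_i, u_i\ra = 1_C$, any vector $\pi_{\phi_T}(c)V_{\phi_T}f$ equals $\sum_i \pi_Y(c\cdot u_i)\bigl(\pi_Y(u_i)^*V_{\phi_T}f\bigr)$, and each $\pi_Y(u_i)^*V_{\phi_T}f$ lies in $E_\psi$, exhibiting the vector in the range of $\Phi$. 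Closedness of the range follows since $\Phi$ is isometric onto its image (by the inner-product identity just proved) and $Y\otimes_D E_\psi$ is complete. Hence $\Phi$ is a surjective isometric $C$--$B$-bimodule map, i.e. a $C$--$B$-correspondence isomorphism, and the lemma follows.
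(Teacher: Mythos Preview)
Your proof is correct and follows essentially the same approach as the paper's: both verify inner-product preservation via Condition~(2), the bimodule property via Condition~(3), and surjectivity by choosing a finite left $C$-basis $\{u_i\}$ of $Y$ with $\sum_i {}_C\la u_i,u_i\ra = 1_C$ and using Condition~(1) to write an arbitrary generator of $E_{\phi_T}$ as $\sum_i \pi_Y(u_i)\pi_Y(u_i)^*$ applied to it. Your version is slightly more explicit about well-definedness on the balanced tensor product and about why the range is closed, but the substance is the same; one small slip is calling $X$ ``the trivial Hilbert $B$-module'' (it is the equivalence bimodule $X$, not $B$), though this does not affect the argument.
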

\begin{proof}
Let $y, y_1 \in Y$, $d, d_1 \in D$, $b, b_1 \in B$. Then
\begin{align*}
\la \Phi(y\otimes(d\otimes b)) \, , \, \Phi(y_1 \otimes(d_1 \otimes b_1 ))\ra_B & =
\la \pi_Y (y)(d\otimes b) \, , \, \pi_Y (y_1 )(d_1 \otimes b_1 ) \ra_B \\
& =\la [\pi_Y (y_1 )^* \pi_Y (y)](d\otimes b ) \, , \, d_1 \otimes b_1 \ra_B \\
& =\la \pi_{\psi}(\la y_1 \, , \, y \ra_D )(d\otimes b) \, , \, d_1 \otimes b_1 \ra_B \\
& =\la \la y_1 \, , \, y \ra_D d \otimes b \, , \, d_1 \otimes b_1 \ra_B .
\end{align*}
On the other hand,
\begin{align*}
\la y\otimes (d\otimes b) \, , \, y_1 \otimes (d_1 \otimes b_1 ) \ra_B 
& =\la d\otimes  b\, , \, \la y, y_1 \ra_D d_1 \otimes b_1 \ra_B \\
& =\la \la y_1 \, , \, y \ra_D d \otimes b \, , \, d_1 \otimes b_1 \ra_B .
\end{align*}
Hence $\Phi$ preserves the right $B$-valued inner products.
Let $c\in C$, $y\in Y$, $d\in D$, $b, b_1 \in B$. Then
\begin{align*}
\Phi(c\cdot y\otimes (d\otimes b)\cdot b_1 ) & =
\Phi((c\otimes y)\otimes (d\otimes bb_1 )) \\
& =\pi_Y (c\cdot y)(d\otimes bb_1 ) \\
& =(\pi_{\phi}(c)\pi_Y (y))(d\otimes b)\cdot b_1 \\
& =c\cdot \pi_Y (y)(d\otimes b)\cdot b_1 \\
& =c\cdot \Phi(y\otimes(d\otimes b))\cdot b_1 .
\end{align*}
Hence $\Phi$ is a $C-B$-bimodule map from $Y\otimes_D E_{\psi}$ to $E_{\phi_T}$.
Furthermore, since $Y$ is a
$C-D$-equivalence bimodule and $C$ is unital, there is a finite subset $\{u_i \}_{i=1}^n$ of
$Y$ such that
$$
\sum_{i=1}^n {}_C \la u_i \, , \, u_i \ra =1_C .
$$
Let $c\in C$, $x\in X$. Then $\sum_{i=1}^n u_i \otimes\pi_Y (u_i )^* (c\otimes x)\in Y\otimes_D E_{\psi}$ and
\begin{align*}
\Phi(\sum_{i=1}^n u_i \otimes\pi_Y (u_i )^* (c\otimes x))
& =\sum_{i=1}^n \pi_Y (u_i )(\pi_Y (u_i )^* (c\otimes x)) \\
& =\sum_{i=1}^n (\pi_Y (u_i )\pi_Y (u_i )^* )(c\otimes x) \\
& =\sum_{i=1}^n \pi_{\phi}({}_C \la u_i \, ,\, u_i \ra)(c\otimes x) \\
& =c\otimes x
\end{align*}
since $\pi_Y (u_i )\pi_Y (u_i )^* =\pi_{\phi} ({}_C \la u_i \, , \, u_i \ra)$ for $i=1,2,\dots, n$. 
Thus
$\Phi$ is surjective. Therefore, $\Phi$ is a $C-B$-correspondence isomorphism of
$Y\otimes_D E_{\psi}$ onto $E_{\phi_T}$. 
\end{proof}
\par
Next, we suppose that $E_{\phi}$ and $E_{\psi}$ are strongly Morita equivalent as
GNS-$C^*$-correspondences. Then there are
a $C-D$-equivalence bimodule $Y$, $A-B$-equivalence bimodule $X$
and a $C-B$-correspondence isomorphism $\Phi_1$ of $Y\otimes_D E_{\psi}$
onto $E_{\phi}\otimes_A X$. Let $\Psi$ be the $C-B$-correspondence isomorphism
of $E_{\phi}\otimes_A X$ onto $E_{\phi_T}$ defined in Lemma \ref{lem:GNS1}.
Let $\Phi=\Psi\circ\Phi_1$, a $C-B$-correspondence isomorphism of $Y\otimes_D E_{\psi}$
onto $E_{\phi_T}$.

\begin{lemma}\label{lem:GNS3} With the above notation, let $\pi_Y$ be the
space of all right $B$-module maps from $E_{\psi}$ to $E_{\phi}$
defined by
$$
\pi_Y (y)(d\otimes b)=\Phi(y\otimes(d\otimes b))
$$
for any $y\in Y$, $d\in D$, $b\in B$. Then $\pi_Y$ is a linear map from $Y$ to $\BB_B (E_{\psi}, E_{\phi_T})$
satisfying Conditions $(1)-(3)$ in Definition $\ref{def:Re2}$.
\end{lemma}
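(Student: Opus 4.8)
The plan is to verify directly that the linear map $\pi_Y$ defined by $\pi_Y(y)(d\otimes b)=\Phi(y\otimes(d\otimes b))$ lands in $\BB_B(E_\psi,E_{\phi_T})$ and satisfies Conditions (1)--(3) of Definition \ref{def:Re2}, using only the two properties of the $C-B$-correspondence isomorphism $\Phi\colon Y\otimes_D E_\psi\to E_{\phi_T}$, namely that it preserves the right $B$-valued inner product and intertwines the $C-B$-bimodule structure (and is bijective). The key observation driving everything is that $\Phi$ being an isometric surjection means that for $y,z\in Y$ and $\eta,\zeta\in E_\psi$ one has $\la\Phi(y\otimes\eta),\Phi(z\otimes\zeta)\ra_B=\la y\otimes\eta,z\otimes\zeta\ra_B=\la\eta,\pi_\psi(\la y,z\ra_D)\zeta\ra_B$, so adjointability of $\pi_Y(y)$ and the computation of $\pi_Y(y)^*\pi_Y(z)$ fall out of the internal tensor product inner product formula.

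First I would check that each $\pi_Y(y)$ is a bounded adjointable map with $\pi_Y(y)^*\pi_Y(z)=\pi_\psi(\la y,z\ra_D)$: for $\eta=d\otimes b$, $\zeta=d_1\otimes b_1$ in the dense submodule of $E_\psi$, compute $\la\pi_Y(z)\zeta,\pi_Y(y)\eta\ra_B=\la\Phi(z\otimes\zeta),\Phi(y\otimes\eta)\ra_B=\la z\otimes\zeta,y\otimes\eta\ra_B=\la\zeta,\pi_\psi(\la z,y\ra_D)\eta\ra_B=\la\pi_\psi(\la y,z\ra_D)\zeta,\eta\ra_B$, which exhibits $\pi_\psi(\la y,z\ra_D)$ as the adjoint of $\pi_Y(y)$ paired against $\pi_Y(z)$; taking $z=y$ and invoking that $\la y,y\ra_D$ is positive gives boundedness, and then the general identity $\pi_Y(y)^*\pi_Y(z)=\pi_\psi(\la y,z\ra_D)$ is Condition (2). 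Next, Condition (3): for $c\in C$, $d'\in D$ and $\eta\in E_\psi$, $\pi_Y(c\cdot y\cdot d')\eta=\Phi((c\cdot y\cdot d')\otimes\eta)=\Phi(c\cdot(y\otimes(d'\cdot\eta)))=c\cdot\Phi(y\otimes(d'\cdot\eta))=\pi_{\phi_T}(c)\,\pi_Y(y)\,\pi_\psi(d')\eta$, where I use that $\Phi$ is a $C$-bimodule map, that $d'\cdot\eta$ in $E_\psi$ is $\pi_\psi(d')\eta$, and that the left $C$-action on $E_{\phi_T}$ is by $\pi_{\phi_T}$. Finally Condition (1): since $\{u_i\}$ with $\sum {}_C\la u_i,u_i\ra=1_C$ exists ($Y$ being a $C-D$-equivalence bimodule with $C$ unital), $\Phi$ is surjective, and $\pi_Y(u_i)\pi_Y(u_i)^*$ can be evaluated via the already-established adjoint formula; I would show $\sum_i\pi_Y(u_i)\pi_Y(u_i)^*=\pi_{\phi_T}(\sum_i{}_C\la u_i,u_i\ra)=1$, and more precisely that $\pi_Y(y)\pi_Y(z)^*=\pi_{\phi_T}({}_C\la y,z\ra)$ by writing $\pi_{\phi_T}({}_C\la y,z\ra)\xi=\sum_i\pi_{\phi_T}({}_C\la y,z\ra)\pi_Y(u_i)\pi_Y(u_i)^*\xi$ and using the bimodule compatibility ${}_C\la y,z\ra\cdot u_i=y\cdot\la z,u_i\ra_D$ together with Condition (3).

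The main obstacle I anticipate is purely bookkeeping rather than conceptual: one must be careful that the formula $\pi_Y(y)(d\otimes b)=\Phi(y\otimes(d\otimes b))$ genuinely extends to a bounded map on all of $E_\psi$ (not merely the algebraic span of elementary tensors $d\otimes b$), which follows because $\Phi$ is already globally defined and bounded on $Y\otimes_D E_\psi$ and $\pi_Y(y)$ is just the restriction of $\Phi$ to the slice $y\otimes E_\psi$ precomposed with the inclusion — so the boundedness is inherited. A second point needing attention is the identification of the left $C$-action: on $E_{\phi_T}$ the left action of $C$ is implemented by the KSGNS representation $\pi_{\phi_T}$, and on $E_\psi$ the left action of $D$ by $\pi_\psi$, so the phrases "$c\cdot$" appearing through $\Phi$ must be translated into these representations at each step — but this is exactly the content of the correspondence-isomorphism hypothesis on $\Phi$, so no real difficulty arises. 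In short, the proof is a routine transport of structure along the isomorphism $\Phi$, and I would present it as three short displayed computations, one for each of Conditions (1), (2), (3), preceded by the adjointability check.
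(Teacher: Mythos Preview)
Your proposal is correct and follows essentially the same route as the paper: you verify adjointability and Condition~(2) from the inner-product preservation of $\Phi$, and Condition~(3) from the $C$--$B$-bimodule property of $\Phi$, exactly as the paper does. The only divergence is in Condition~(1): the paper simply applies the adjoint formula $\pi_Y(y_1)^*\bigl(\Phi(y\otimes(d\otimes b))\bigr)=\pi_\psi(\la y_1,y\ra_D)(d\otimes b)$ (which you have already obtained) to compute $\pi_Y(z)\pi_Y(y_1)^*$ on a generic element $\Phi(y\otimes(d\otimes b))$ of $E_{\phi_T}$ and compares it directly with $\pi_{\phi_T}({}_C\la z,y_1\ra)$ acting on the same element, using only ${}_C\la z,y_1\ra\cdot y=z\cdot\la y_1,y\ra_D$. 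Your detour through a finite basis $\{u_i\}$ and the identity $\sum_i\pi_Y(u_i)\pi_Y(u_i)^*=1$ is valid but unnecessary; the paper's direct computation is shorter and avoids any appearance of circularity.
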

\begin{proof} First, we show that $\pi_Y (y)\in \BB_B (E_{\psi}, E_{\phi_T})$ for any $y\in Y$.
It suffices to show that $\pi_Y (y)$ is adjointable for any $y\in Y$. For any $y, y_1 \in Y$, $d, d_1 \in D$, $b, b_1 \in B$,
\begin{align*}
\la \pi_Y (y)(d\otimes b) \, , \, \pi_Y (y_1 )(d_1 \otimes b_1 ) \ra_B & =
\la \Phi(y\otimes (d\otimes b)) \, , \, \Phi(y_1 \otimes (d_1 \otimes b_1 )) \ra_B \\
& =\la y\otimes (d\otimes b) \, , \, y_1 \otimes (d_1 \otimes b_1 ) \ra_B \\
& =\la d\otimes b \, , \, \la y , y_1 \ra_D d_1 \otimes b_1 \ra_B .
\end{align*}
On the other hand,
\begin{align*}
\la \pi_{\psi}(\la y_1 \, , \, y \ra_D )(d\otimes b) \, , \, d_1 \otimes b_1 \ra_B & =
\la \la y_1 , y \ra_D d \otimes b \, , \, d_1 \otimes b_1 \ra_B \\
& =\la d\otimes b \, , \, \la y , y_1 \ra_D d_1 \otimes b_1 \ra_B .
\end{align*}
Since the linear span of the set
$$
\{\Phi(y\otimes (d\otimes b)) \, | \, y\in Y, \quad d\in D, \quad b\in B \}
$$
is dense in $E_{\phi_T}$, $\pi_Y (y_1 )$ is adjointable for any $y\in Y$ and
$$
\pi_Y (y_1 )^* (\Phi(y\otimes (d\otimes b)))=\pi_{\psi}(\la y_1 ,  y \ra_D )(d\otimes b) .
$$
And,
$$
\pi(y_1 )^* \pi(y)=\pi_{\psi}(\la y_1 , y \ra_D ) .
$$
Furthermore, for any $y, y_1 , z\in Y$, $d\in D$, $b\in B$,
$$
\pi_Y (z)\pi_Y (y_1 )^* \Phi(y\otimes (d\otimes b))=\pi_Y (z)(\la y_1 , y \ra_D d\otimes b)
=\Phi(z\otimes \la y_1 , y \ra_D d\otimes b) .
$$
On the other hand,
\begin{align*}
\pi_{\phi_T}({}_C \la z, y_1 \ra )\Phi(y\otimes (d\otimes b)) & =
{}_C \la z, y_1 \ra \cdot \Phi(y\otimes(d\otimes b)) \\
& =\Phi(({}_C \la z, y_1 \ra \cdot y)\otimes(d\otimes b)) \\
& =\Phi(z\cdot \la y_1 , y \ra_D \otimes (d\otimes b )) \\
& =\Phi(z\otimes (\la y_1 , y \ra_D d\otimes b )) .
\end{align*}
Hence $\pi_Y (z)\pi_Y (y_1 )^* =\pi_{\phi_T}({}_C \la z, y_1 \ra)$ for any $z, y_1 \in Y$.
Finally, for any $c\in C$, $y\in Y$, $b\in B$, $d, d_1 \in D$,
\begin{align*}
\pi_{\phi_T}(c)\pi_Y (y)\pi_{\psi}(d_1 )(d\otimes b) & =\pi_{\phi_T}(c)\pi_Y (y)(d_1 d\otimes b) \\
& =\pi_{\phi_T}(c)\Phi(y\otimes (d_1 d \otimes b)) \\
& =c\cdot \Phi(y\otimes (d_1 d \otimes b) )\\
& =\Phi((c\cdot y\cdot d_1 )\otimes (d\otimes b) )\\
& =\pi_Y (c\cdot y\cdot d_1 )(d\otimes b) .
\end{align*}
Thus $\pi_{\phi_T}(c)\pi_Y (y)\pi_{\psi}(d_1 )=\pi_Y (c\cdot y\cdot d_1 )$ for any $c\in C$, $y\in Y$, $d_1 \in D$.
Therefore, we obtain the conclusion.
\end{proof}

Combining Lemmas \ref{lem:GNS1}, \ref{lem:GNS2} and \ref{lem:GNS3},
we obtain the following theorem:

\begin{thm}\label{thm:GNS4} Let $C, D$ and $A, B$ be unital $C^*$-algebras. Let $\phi$ and
$\psi$ be completely positive linear maps from $C$ and $D$ to $A$ and $B$, respectively and $E_{\phi}$ and
$E_{\psi}$ the $C-A$-correspondence and the $D-B$-correspondence induced by $\phi$ and $\psi$,
respectively. Then the following hold:
\newline
$(1)$ If there is an $A-B$-equivalence bimodule $X$ and $\phi_T$ and $\psi$ are strongly Morita equivalent
as completely positive linear maps, then $E_{\phi}$ and $E_{\psi}$ are strongly Morita equivalent
as GNS-$C^*$-correspondences, where $\phi_T$ is a completely positive linear map
from $C$ to $\BB_B (X)$ induced by $\phi$ and the isomorphism $T$ of $A$ onto $\BB_B (X)$ defined by
$T_a (x)=a\cdot x$ for any $a\in A$, $x\in X$.
\newline
$(2)$ If $E_{\phi}$ and $E_{\psi}$ are strongly Morita equivalent
as GNS-$C^*$-correspondences, then there is an $A-B$-equivalence bimodule $X$ and $\phi_T$ and
$\psi$ are strongly Morita equivalent
as completely positive linear maps,
where $\phi_T$ is a completely positive linear map
from $C$ to $\BB_B (X)$ induced by $\phi$ and the isomorphism $T$ of $A$ onto $\BB_B (X)$ defined by
$T_a (x)=a\cdot x$ for any $a\in A$, $x\in X$.
\end{thm}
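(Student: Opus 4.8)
The plan is to prove Theorem~\ref{thm:GNS4} by simply assembling the three lemmas already established, so the real content is recognizing that each direction is a short bookkeeping argument once the correct isomorphisms are in place. For part~(1), I would start from the hypothesis that there is an $A-B$-equivalence bimodule $X$ and that $\phi_T$ and $\psi$ are strongly Morita equivalent as completely positive linear maps; by Definition~\ref{def:Co3} this means a minimal KSGNS-representation for $\phi_T$ is strongly Morita equivalent (in the sense of Definition~\ref{def:Re2}) to one for $\psi$, so there exist a $C-D$-equivalence bimodule $Y$ and a linear map $\pi_Y \colon Y \to \BB_B(E_{\psi}, E_{\phi_T})$ satisfying Conditions (1)--(3). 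Lemma~\ref{lem:GNS2} then hands me a $C-B$-correspondence isomorphism $\Phi$ of $Y\otimes_D E_{\psi}$ onto $E_{\phi_T}$, and Lemma~\ref{lem:GNS1} gives a $C-B$-correspondence isomorphism $\Psi$ of $E_{\phi}\otimes_A X$ onto $E_{\phi_T}$. Composing, $\Psi^{-1}\circ\Phi$ is a $C-B$-correspondence isomorphism of $Y\otimes_D E_{\psi}$ onto $E_{\phi}\otimes_A X$, which is exactly the data required by Definition~\ref{def:GNS0} for $E_{\phi}$ and $E_{\psi}$ to be strongly Morita equivalent as GNS-$C^*$-correspondences.

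For part~(2), I would run the argument in reverse. Assume $E_{\phi}$ and $E_{\psi}$ are strongly Morita equivalent as GNS-$C^*$-correspondences; unwinding Definition~\ref{def:GNS0} produces a $C-D$-equivalence bimodule $Y$, an $A-B$-equivalence bimodule $X$, and a $C-B$-correspondence isomorphism $\Phi_1$ of $Y\otimes_D E_{\psi}$ onto $E_{\phi}\otimes_A X$. Taking $\phi_T = T\circ\phi$ with $T$ the canonical isomorphism of $A$ onto $\BB_B(X)$, the isomorphism $\Psi$ of Lemma~\ref{lem:GNS1} lets me form $\Phi = \Psi\circ\Phi_1$, a $C-B$-correspondence isomorphism of $Y\otimes_D E_{\psi}$ onto $E_{\phi_T}$. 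Feeding this $\Phi$ into Lemma~\ref{lem:GNS3} yields a linear map $\pi_Y \colon Y \to \BB_B(E_{\psi}, E_{\phi_T})$ satisfying Conditions (1)--(3) of Definition~\ref{def:Re2}, so a minimal KSGNS-representation for $\phi_T$ is strongly Morita equivalent to one for $\psi$; by Definition~\ref{def:Co3} this says precisely that $\phi_T$ and $\psi$ are strongly Morita equivalent as completely positive linear maps.

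The one point that warrants a sentence of care, rather than a genuine obstacle, is the identification of $E_{\phi}$ (and likewise $E_{\phi_T}$, $E_{\psi}$) simultaneously as a GNS-$C^*$-correspondence and as the Hilbert module appearing in a minimal KSGNS-representation: the paragraph preceding the lemmas already records that these two constructions coincide, so I would just invoke it and note that Lemma~\ref{lem:GNS3}'s target $\BB_B(E_{\psi}, E_{\phi_T})$ matches the space used in Definition~\ref{def:Re2}. Everything else is formal composition of bimodule isomorphisms, and no new estimate or construction is needed.

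\begin{proof}
Both assertions follow by combining the preceding three lemmas with the identification, recorded above, of the GNS-$C^*$-correspondence $E_{\phi}$ with the right Hilbert module in a minimal KSGNS-representation for $\phi$ (and similarly for $\phi_T$ and $\psi$).

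$(1)$ Suppose there is an $A-B$-equivalence bimodule $X$ and that $\phi_T$ and $\psi$ are strongly Morita equivalent as completely positive linear maps. By Definition \ref{def:Co3} and Definition \ref{def:Re2}, there exist a $C-D$-equivalence bimodule $Y$ and a linear map $\pi_Y$ from $Y$ to $\BB_B (E_{\psi}, E_{\phi_T})$ satisfying Conditions (1)--(3) in Definition \ref{def:Re2}. By Lemma \ref{lem:GNS2}, the map $\Phi$ defined by $\Phi(y\otimes(d\otimes b))=\pi_Y (y)(d\otimes b)$ is a $C-B$-correspondence isomorphism of $Y\otimes_D E_{\psi}$ onto $E_{\phi_T}$. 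By Lemma \ref{lem:GNS1}, the map $\Psi$ is a $C-B$-correspondence isomorphism of $E_{\phi}\otimes_A X$ onto $E_{\phi_T}$. Hence $\Psi^{-1}\circ\Phi$ is a $C-B$-correspondence isomorphism of $Y\otimes_D E_{\psi}$ onto $E_{\phi}\otimes_A X$. Therefore, by Definition \ref{def:GNS0}, $E_{\phi}$ and $E_{\psi}$ are strongly Morita equivalent as GNS-$C^*$-correspondences.

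$(2)$ Suppose $E_{\phi}$ and $E_{\psi}$ are strongly Morita equivalent as GNS-$C^*$-correspondences. Then there are a $C-D$-equivalence bimodule $Y$, an $A-B$-equivalence bimodule $X$ and a $C-B$-correspondence isomorphism $\Phi_1$ of $Y\otimes_D E_{\psi}$ onto $E_{\phi}\otimes_A X$. Let $T$ be the isomorphism of $A$ onto $\BB_B (X)$ defined by $T_a (x)=a\cdot x$ for any $a\in A$, $x\in X$, and let $\phi_T =T\circ\phi$. By Lemma \ref{lem:GNS1}, $\Psi$ is a $C-B$-correspondence isomorphism of $E_{\phi}\otimes_A X$ onto $E_{\phi_T}$, so $\Phi=\Psi\circ\Phi_1$ is a $C-B$-correspondence isomorphism of $Y\otimes_D E_{\psi}$ onto $E_{\phi_T}$. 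By Lemma \ref{lem:GNS3}, the map $\pi_Y$ defined by $\pi_Y (y)(d\otimes b)=\Phi(y\otimes(d\otimes b))$ is a linear map from $Y$ to $\BB_B (E_{\psi}, E_{\phi_T})$ satisfying Conditions (1)--(3) in Definition \ref{def:Re2}. Hence a minimal KSGNS-representation for $\phi_T$ is strongly Morita equivalent to one for $\psi$, and by Definition \ref{def:Co3} the maps $\phi_T$ and $\psi$ are strongly Morita equivalent as completely positive linear maps. This completes the proof.
\end{proof}
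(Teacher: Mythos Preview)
Your proof is correct and follows exactly the paper's own approach: part~(1) is obtained by combining Lemmas~\ref{lem:GNS1} and~\ref{lem:GNS2}, and part~(2) is obtained from Lemma~\ref{lem:GNS3} (together with the setup preceding it, which already invokes Lemma~\ref{lem:GNS1}). You have simply spelled out in full detail what the paper's two-line proof leaves implicit.
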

\begin{proof} We can show (1) by Lemmas \ref{lem:GNS1} and \ref{lem:GNS2}. We can also show
(2) by Lemma \ref{lem:GNS3}.
\end{proof}

Finally, we give an example on Theorem \ref{thm:GNS4}.

\begin{exam}\label{exam:GNS5} Let $C, D, B$ be unial $C^*$-algebras and $\psi$ a completely
positive linear map from $D$ to $B$. We regard $\psi$ as a completely positive linear map
from $D$ to $\BB_B (B)$. We assume that $C$ and $D$ are strongly Morita equivalent.
Then we can construct $\phi_T$, a completely positive linear map from $C$ to $\BB_B (B\otimes\BC^n )$
by Theorem $\ref{thm:Co5}$, where $n$ is some positive integer. Let $T$ be the isomorphism of
$B\otimes M_n (\BC)$ onto $\BB_B (B\otimes\BC^n )$ defined by
$$
T_b (x)=b\cdot x
$$
for any $b\in B\otimes M_n (\BC )$, $x\in B\otimes\BC^n$, where we regard $B\otimes\BC^n $
as a $B\otimes M_n (\BC)-B$-equivalence bimodule in the natural way. Let $\phi=T^{-1}\circ\phi_T$.
Then by Theorems $\ref{thm:Co5}$ and $\ref{thm:GNS4}$, $E_{\phi}$ and $E_{\psi}$ are strongly Morita
equivalent as GNS-$C^*$-correspondences, where $E_{\phi}$ and $E_{\psi}$ are
a $C-B\otimes M_n (\BC)$-correspondence and a $D-B$-correspondence, respectively.
\end{exam}

\end{document}